\DeclareMathOperator{\sech}{sech} 
\definecolor{dkgreen}{rgb}{0,0.6,0}
\definecolor{gray}{rgb}{0.5,0.5,0.5}
\definecolor{mauve}{rgb}{0.58,0,0.82}
\tiny\color{gray},
\newcommand{\esref}[1]{Equation~(\ref{#1})}
\newcommand{\fref}[1]{Figure~\ref{#1}}
\newtheorem{assumption}{Assumption}
\newtheorem{lemma}{Lemma}
\newtheorem{remark}{Remark}[section]
\newtheorem{theorem}{Theorem}[section]
\newcommand{\answ}[1]{{\color{black}{#1}}}
\newcommand{\reviewer}[1]{{\textcolor{blue}{#1}}}
\begin{document}
\begin{frontmatter}
\title{Virtual element method for semilinear sine-Gordon equation over polygonal mesh using product approximation technique}
\author[ind1]{D Adak}
\author[ind1]{S Natarajan\corref{cor1}\fnref{fnlab1}}

\address[ind1]{Integrated Modelling and Simulation Lab, Department of Mechanical Engineering, Indian Institute of Technology Madras, Chennai-600036, India.}

\cortext[cor1]{Corresponding author}
\fntext[fnlab1]{Department of Mechanical Engineering, Indian Institute of Technology Madras, Chennai-600036, India. E-mail:sundararajan.natarajan@gmail.com; snatarajan@iitm.ac.in}

\begin{abstract}
In this paper, we employ the linear virtual element spaces to discretize the semilinear sine-Gordon equation in two dimensions. The salient features of the virtual element method (VEM) are: (a) it does not require explicit form of the shape functions to construct the nonlinear and the bilinear terms, and (b) relaxes the constraint on the mesh topology by allowing the domain to be discretized with general polygons consisting of both convex and concave elements, \answ{and (c) easy mesh refinements (hanging nodes and interfaces are allowed )}. The nonlinear source term is discretized by employing the product approximation technique and for temporal discretization, the Crank-Nicolson scheme is used. The resulting nonlinear equations are solved using the Newton's method. We derive \answ{\textit{a priori}} error estimations in $L^2$ and $H^1$ norms. The convergence properties and the accuracy of the virtual element method for the solution of the sine-Gordon equation are demonstrated with  academic numerical experiments.


\end{abstract}
\begin{keyword}
Virtual element method, Product approximation technique, sine-Gordon equation

\end{keyword}

\end{frontmatter}
\section{Introduction}

The sine-Gordon equation is a nonlinear hyperbolic equation that finds application in a variety of problems in Science and Engineering, viz., shallow-water waves, optical fibers, Josephson-junction, mechanical transmission line, to name a few. The solution of the sine-Gordon equation consists of soliton and multi-soliton solutions. In this direction, several numerical techniques have been employed to study this problem, that includes, finite difference method~\cite{ben1986numerical}, the finite element method~\cite{argyris1991finite}, the meshless methods~\cite{dehghan2010numerical} and the boundary element method~\cite{dehghan2008dual}. Ben-Yu  et al. \cite{ben1986numerical} 
employed the finite difference scheme for the two-dimensional undamped sine-Gordon equation. By exploiting the product approximation techniques, Argyris  et al. \cite{argyris1991finite} studied the 2D sine-Gordon equation in the context of the finite element method. Further, Christiansen and Lomdahl \cite{christiansen1981numerical} employed the generalized leapfrog method to study the solution of the two-dimensional sine-Gordon equation. Aforementioned studies focused on undamped sine-Gordon equation. The damped sine-Gordon equation demands more sophisticated numerical techniques. In this direction, the following articles are presented in the literature: Nakajima  et al. \cite{nakajima1974numerical}  proposed an efficient numerical technique for the solution of the sine-Gordon equation considering dimensionless loss factors and unit less normalized bias. Recently, a dual reciprocity boundary element method~\cite{dehghan2008dual} and the radial basis function method~\cite{dehghan2008numerical} have been introduced for the above mentioned model problem. However, most of these techniques require the domain to be discretized with simplex elements such as triangles/quadrilaterals.

The introduction of elements with arbitrary number of sides has revolutionized the simulation technique. Elements with arbitrary number of sides offer greater flexibility in meshing and \answ{are} less sensitive to mesh distortion~\cite{sukumarmalsch2006,manzinirusso2014}. Among the different approaches based on polygonal elements, such as the scaled boundary finite element method~\cite{natarajanooi2014}, the polygonal FEM~\cite{sukumarmalsch2006,szesheng2005,bishop2014}, the strain smoothing techniques~\cite{liutrung2010,natarajanbordas2015,francisa.ortiz-bernardin2017}, the virtual element method (VEM)~\cite{beirao2013basic,beirao2016virtual,vacca2015virtual} has an edge. The VEM is a recently developed framework, which provides \answ{FEM-like} approximations over arbitrary polygonal meshes. Some of the salient features of the VEM are: (a) it does not require an explicit form of the shape functions to compute the  bilinear form; (b) it can be applied to convex, and concave elements, i.e., star convexity is not required; (c)\answ{ easy mesh refinements (hanging nodes and interfaces are allowed )}  and (d) the degrees of freedom associated with the VEM space, provides sufficient information to compute the discrete bilinear form. Due to its versatility, the method has been applied to a wide variety of problems. Some of them include: the Stokes equation \cite{antonietti2014stream}, the Plate bending equation \cite{brezzi2013virtual}, linear elliptic, parabolic, and hyperbolic equations \cite{beirao2013basic,beirao2016virtual,vacca2015virtual,vacca2016virtual}, linear and nonlinear elasticity problems~\cite{da2013virtual,da2015virtual}, the convection diffusion equation with small diffusion \cite{benedetto2016order}, eigenvalue problems \cite{mora2015virtual,certik2018virtual}, nonconforming VEM for Stokes and elliptic equations~\cite{cangiani2016nonconforming,de2016nonconforming,cangiani2016conforming}. Recently an open source C++ library has been developed by Ortiz-Bernardin et al.,~\cite{BernardinAlvarez2017}. Sutton~\cite{Sutton2017} developed a 50-line MATLAB implementation for the lowest order VEM for {the} two-dimensional Poisson's problem. Dhanush and Natarajan implemented the VEM into the commercial finite element software  Abaqus~\cite{dhanushnatarajan2018} using user defined subroutines for thermo-elasticity.


However, most of the highlighted works deal with linear model problems. To the best of the authors' knowledge, limited research has been focused on employing the VEM for nonlinear problems. By employing the standard linearization technique, Antonietti  et al.~\cite{antonietti2016c} introduced the $\mathcal{C}^1$ VEM for the Cahn-Hilliard problem.  Beir\~{a}o  da Veiga  {et} al.  designed a new projection operator to approximate the tri-linear term in the Navier-Stokes equation \cite{da2018virtual}. Recently Adak et al.~\cite{adak2019convergence,adak2018virtual}, extended the VEM to semilinear parabolic and hyperbolic equations. The nonlinear term was approximated by employing the $L^2$ projection operator. However, the difficulty \answ{with this approach} is that the computation of the Jacobian is numerically expensive, as the Jacobian has to be updated at each time step. To improve the robustness of the VEM, in this paper, we use the product approximation technique~\cite{argyris1991finite,christie1981product} to discretize the nonlinear source term in the sine-Gordon equation. The salient feature of the proposed scheme is that it is easy to implement, and it is computable, thus avoiding complicated Jacobian matrix formation. 

The rest of the manuscript is organized as follows. In Section~\ref{notation:sg}, we define mesh regularity {assumptions} for theoretical estimations and the basic notation of Sobolev spaces. In the same \answ{section}, we introduce the model problem and {the} weak formulation. In Section~\ref{virtual_sg} we review the fundamental setting  of the virtual element {spaces} and the corresponding semidiscrete and fully-discrete formulations. In the same section,  we define the product approximation techniques for the nonlinear source function in view of the virtual element setting. Error estimations for semi-discrete and fully discrete schemes in respective norms are derived in Section~\ref{converge:sg}. Finally, in Section \ref{num_ex_sg} we investigate the numerical solutions of the sine-Gordon equation with different boundary conditions. A comparison of the proposed approach with the existing technique \cite{adak2019convergence} is discussed in the same section, followed by concluding remarks in the last section.


\section{Governing equation and the weak form}
\label{notation:sg}
Consider the sine-Gordon equation in two space variables
\begin{equation}
    D^2_t u+ \gamma \  D_t u-\Delta u= f(u)
\label{eqn:mod_prob}
\end{equation}
in the \answ{convex polygonal region $\Omega \subset \mathbb{R}^2$ }and $t \in [0,T]$, where $\gamma$ is the dissipative coefficient and $T$ denotes the final time. The above equation is supplemented with the following boundary conditions:
\begin{equation}
    \begin{aligned}
      u(x,y,t)=0 \quad \answ{ \forall \ (x,y) \in \partial \Omega} ; \ 0 \leq t \leq T,  
    \end{aligned}
\end{equation}
and $f(u):=-\sin u$. Further, the initial conditions are: 
\begin{equation*}
\begin{aligned}
    u(x,y,0)=h(x,y), \quad \answ{\forall \ (x,y) \in  \Omega}, \\
    D_t u(x,y,0)=g(x,y), \quad  \answ{\forall \ (x,y) \in  \Omega}. 
    \end{aligned}
\end{equation*}
The functions $h(x,y)$ and $g(x,y)$ denote the wave modes and their velocity, respectively. Now, multiplying \esref{eqn:mod_prob} by the test function $v \in H^1_0(\Omega)$ and using Green's theorem, the weak formulation is given by:
Given $h(x,y),\ g(x,y) \in H^1_0(\Omega)$, find $u \in \mathcal{C}^0(0,T;H^1_0(\Omega))\cap \mathcal{C}^1(0,T;L^2(\Omega))$ such that~\cite{grisvard1992singularities}:
\begin{equation}
\label{cont_weak:SG}
    \left \{
    \begin{aligned}
    & (D^2_t u,v)+ \gamma \  (D_t u,v)+a(u,v)=(f(u),v) \  \forall v \in H^1_0(\Omega), \ \text{for a.e.} ~~ t \in (0,T], \\
    & u(0)=h(x,y), \\
    & D_t u(0)=g(x,y),
    \end{aligned}
    \right.
\end{equation}
where 
\begin{itemize}
    \item $D^2_t u \in L^2(0,T;L^2(\Omega))$ and $D_t u \in L^2(0,T;L^2(\Omega))$ denote the \answ{derivatives} of $u$ with respect to $t$.
    \item $(\cdot,\cdot)$ denotes the global $L^2(\Omega)$ inner product.
    \item $a(\cdot,\cdot)$ denotes the $H^1_0(\Omega)$ inner-product.
\end{itemize}
\esref{cont_weak:SG} is a \answ{second-order} nonlinear differential equation with respect to time $t$, where $f(u)$ is a global Lipschitz continuous function in $u$ and the bilinear form $a(\cdot,\cdot)$ is continuous and coercive, i.e.
\begin{equation*}
    a(u,v) \leq~C~\|u\|_{1,\Omega} \ \|v\|_{1,\Omega} \quad \text{and} \quad \answ{a(u,v) \geq \alpha ~|v|_{1,\Omega}} \quad \forall\  u,v \in H^1_0(\Omega).
\end{equation*}
Hence, problem \eqref{cont_weak:SG} has a unique solution~\cite{baker1980multistep,lions2012non}.

\section{The virtual element framework} 
\label{virtual_sg}
The virtual element method can be seen as a generalization of the finite element method (FEM) that relaxes the constraint on the mesh topology that the FEM otherwise imposes. In this section, we present a brief overview of the VEM.

\paragraph{Notations} Let $ \{ \mathcal{T}_{h} \}_{h} $ be a family of decompositions of $\Omega $ into polygonal elements. Let $K$ be an element with diameter $h_{K}$ and $N_K$ be the total number of vertices of a polygon $K$. Let us define mesh diameter $h:= \underset {K\in \mathcal{T}_{h}}{\text{max}} (h_{K}) $. For a Banach space $H$ with norm $\| \cdot \|_H$, and a function $\omega: [0,T] \rightarrow  H$, that is Lebesgue measurable, the following norms are defined:
\begin{equation*}
    \|\omega\|_{L^2(0,T;H)}:=\Big(\int_0^T \|\omega \|_{H}^2~ds \Big)^{1/2} \quad \text{and} \quad
\|\omega\|_{L^{\infty}(0,T;H)}:=\underset{0 \leq t \leq T}{\text{ess sup}}~\|\omega(\cdot,t)\|_{H}.
\end{equation*}
We define
\begin{equation*}
    L^{2}(0,T;H)=\{ \omega:(0,T] \rightarrow H:\|\omega\|_{L^{2}(0,T;H)} < \infty\}.
\end{equation*}
The local interpolation approximation properties and the stability of the discrete bilinear forms depend on the mesh regularity, which is stated below~\cite{beirao2013basic,ahmad2013equivalent}. These are later required for estimating the \answ{{\it a priori}} estimates in the respective norms ($L^2$ and $H^1$ norm).
\begin{assumption}
\label{assum_1}
\text{(Mesh regularity)}
\item (1) There exists a real number $\rho>0$ such that, for all $h$, each element $K \in \mathcal{T}_{h} $ is star-shaped with respect to a ball of radius $\geq \rho h_{K}$.
\item (2) There exists a constant \answ{$c \in (0,1)$} such that, for all $h$ and for each element $K\in \mathcal{T}_{h}$, the distance between any two vertices of $K$ is $\geq c\ h_{K}$.
\end{assumption}

The discrete bilinear forms for the model problem (\answ{cf.} \eqref{eqn:mod_prob}) are constructed with the help of the following two projection operators (defined element wise), 
\begin{itemize}
    \item The elliptic projection operator $\mathcal{P}^{\nabla}_{K}:H^{1}(K)\rightarrow \mathbb{P}_{1}(K)$ is defined by:
    \begin{equation}
    \label{ell_pro}
    \int_{K} \nabla(\mathcal{P}^{\nabla}_{K} u-u)\cdot \nabla p_1=0 \quad \forall \ p_1 \in \mathbb{P}_{1}(K) \quad \text{and} \quad P^0(\mathcal{P}^{\nabla}_{K} u-u)=0,
    \end{equation} 
    $\forall \  u \in H^1(K)$, where $P^{0}u$ is defined as \answ{ $P^{0}u :=\frac{1}{|\partial K|}~\int_{\partial K } u$}.
    \item The local $L^2$ projection operator $\mathcal{P}^{0}_{K}:L^2(K)\rightarrow \mathbb{P}_{1}(K)$, defined by: \cite{de2016nonconforming}
    \begin{equation}
    \label{l2_pro_p1}
    \Big(\mathcal{P}^{0}_{K}q-q,p_{1} \Big)_{0,K}=0 \quad \forall \ p_1 \in \mathbb{P}_{1}(K), ~\forall \  q \in L^2(K).
    \end{equation}
\end{itemize}
Globally, the projection operator is constructed as $(\mathcal{P}^{0} q)|_{K}=\mathcal{P}^{0}_{K}(q)$ for all $q \in L^2(\Omega)$.
With these auxiliary space is given by:
 \begin{equation*}
\mathcal{G}(K):=\Big\{ v \in H^{1}(K)\cap C^{0}(\partial K): v|_{e} \in \mathbb{P}_{1}(e) \ \forall e \in \partial K,\  \Delta v \in \mathbb{P}_{1}(K)\Big \}.
\end{equation*}
\answ{We would like to highlight that the elliptic projection operator $\mathcal{P}^{\nabla}_K$ defined in \eqref{ell_pro} is computable from the vertex values of a function which is in $H^1(K) \cap C^0(\bar{K})$. Hence it is compuatable for all the function that is in $\mathcal{G}(K)$. Further, using the projection operator $\mathcal{P}^{\nabla}_{K}$, we define the local modified VEM space \cite{ahmad2013equivalent} }
\begin{equation*}
\mathcal{Q}(K):= \Big\{ v \in \mathcal{G}(K):\int_{K} (\mathcal{P}^{\nabla}_{K}v) q=\int_{K} v\  q ~ \forall q \in \mathbb{P}_{1}(K)  \Big\} \quad \answ{~ K \in \mathcal{T}_{h}},
\end{equation*}
and the global virtual element space as
\begin{equation*}
\mathcal{Q}_{h}:= \Big \{ v \in H^{1}_{0}(\Omega):v|_{K}\in \mathcal{Q}(K)  \ \forall K \in \mathcal{T}_{h} \Big\}.
\end{equation*}
 In addition, we would like to highlight that in the modified virtual element space $\mathcal{Q}(K)$, we can compute the projection operator $\mathcal{P}^{0}_{K}$. The dimension of the local virtual element space $\mathcal{Q}(K)$ is the same as the linear space defined in \cite{beirao2013basic}, where the unisolvency is also proved. Further, we define the degrees of freedom (\answ{DOFs}) associated with $\mathcal{Q}(K)$:
\begin{itemize}
\item $({\bf d_1 })$ values of $v$ at the $N_K$ vertexes of $K$.
\label{ch_1_dof}
\end{itemize}
The polynomial space $\mathbb{P}_{1}(K)$ is subset of $\mathcal{Q}(K)$. However, for implementation purpose, we use the set of scaled monomials $\mathcal{M}(K)$  defined in the following manner:
\begin{equation*}
\mathcal{M}(K) = \Big\{ 1, \frac{x-x_K}{h_K},  \frac{y-y_K}{h_K} \Big\}.
\end{equation*}

\subsection{Discrete virtual element formulation}
Unlike the FEM, the VEM does not require an explicit form of the basis functions $\eta$ to compute the bilinear forms. By employing the elliptic projection operator $\mathcal{P}^{\nabla}_{K}$, we decompose $v_{h}\in \mathcal{Q}(K)$ as $v_{h}=\mathcal{P}^{\nabla}_{K}(v_{h})+(I-\mathcal{P}^{\nabla}_{K})(v_{h})$, where $\mathcal{P}^{\nabla}_{K}(v_{h})$ is the polynomial part and $(I-\mathcal{P}^{\nabla}_{K})(v_{h})$ is the nonpolynomial part. The polynomial part can be directly computed using the \answ{DOFs}, whilst the nonpolynomial part can only be approximated using the \answ{DOFs}. For each polygon $K\in \mathcal{T}_h$, the discrete bilinear form \mbox{$a^{K}_h(\cdot,\cdot):\mathcal{Q}(K) \times \mathcal{Q}(K)\rightarrow \mathbb{R}$} is defined as follows
\begin{equation}
\begin{split}
a^{K}_{h}(u_{h},v_{h})&:=a^{K}\Big(\mathcal{P}^{\nabla}_{K}(u_{h}),\mathcal{P}^{\nabla}_{K}(v_{h})\Big)+S^{K}_{a} \Big((I-\mathcal{P}^{\nabla}_{K})u_{h},(I-\mathcal{P}^{\nabla}_{K})v_{h}\Big),
\label{temp4}
\end{split}
\end{equation}
where $u_{h},v_{h} \in \mathcal{Q}(K)$ and $S^{K}_{a}(\cdot,\cdot)$ is a symmetric bilinear form that ensures stability of the discrete bilinear form $ a^{K}_{h}(\cdot,\cdot)$. Moreover, the stabilizer $S^{K}_{a}(\cdot,\cdot)$ \answ{must be} spectrally equivalent to the identity matrix and scales as $a^{K}(\cdot,\cdot)$.
The global bilinear form is then given by: \mbox{$a_{h}(\cdot,\cdot):\mathcal{Q}_{h} \times \mathcal{Q}_{h} \rightarrow \mathbb{R}$} by adding local contributions as
\begin{eqnarray}
a_{h}(u_{h},v_{h}):=\sum_{K \in \mathcal{T}_{h}} a^{K}_{h}(u_{h},v_{h}) \ \forall \ u_{h},v_{h} \in \mathcal{Q}_{h}.
\label{temp6}
\end{eqnarray}

Following \cite{vacca2015virtual,beirao2016virtual} and using the orthogonal $L^2$ projection operator $\mathcal{P}^{0}_{K}$, we can formulate the local bilinear form $m^{K}_{h}(\cdot,\cdot):\mathcal{Q}(K) \times \mathcal{Q}(K) \rightarrow \mathbb{R}$ for each polygon $K$  as
\begin{equation}
m^{K}_{h}(u_{h},v_{h}):= \Big(\mathcal{P}^{0}_{K}u_{h},\mathcal{P}^{0}_{K}v_{h}\Big)+S^{K}_{m}\Big((I-\mathcal{P}^{0}_{K})u_{h},(I-\mathcal{P}^{0}_{K})v_{h} \Big).   
\label{mass:SG}
\end{equation}
With the global form given by:
\begin{equation}
m_{h}(u_h,v_h) = \sum_{K \in \mathcal{T}_{h}} m^{K}_{h}(u_h,v_h) \quad \forall \  u_h,v_h \in \mathcal{Q}_{h}.
\label{mass1:SG}
\end{equation}
The construction of $a^{K}_{h}(\cdot,\cdot)$ and $m_h^{K}(\cdot,\cdot)$ satisfy the consistency (with respect to polynomials of $\mathbb{P}^{1}(K) $) and the stability properties. \answ{Moreover, we denote the matrix representation of the bilinear forms $a_h(\cdot,\cdot)$ and $m_h(\cdot,\cdot)$ by $\mathbf{A}$ and $\mathbf{M}$ respectively. In particular, $(\mathbf{A})_{ij}=a_h(\eta_i,\eta_j)$ and $(\mathbf{M})_{ij}=m_h(\eta_i, \eta_j)$, where $\eta_i$ represents the basis function.}
\begin{remark}
\answ{In the light of the previous discussion, we highlight that $S^K_a(\cdot,\cdot)$ and $S^K_m(\cdot,\cdot)$ are symmetric bilinear forms on $\mathcal{Q}(K) \times \mathcal{Q}(K)$ that scale like $a^K(\cdot,\cdot)$ and $m^K(\cdot,\cdot)$, respectively. Furthermore, there exist positive constants $\alpha_1$, $\alpha_2$, $\beta_1$ and $\beta_2$ such that, the following inequalities hold:
\begin{equation*}
    \begin{split}
        \alpha_1 \ a^{K}(v,v) & \leq S^{K}_{a}(v,v)  \leq \alpha_2 \  a^{K}(v,v)  \quad \forall v \in \mathcal{Q}(K) \cap Ker(\mathcal{P}^{\nabla}_K)  \\
\beta_1\  m^{K}(w,w) & \leq S^{K}_{m}(w,w)  \leq \beta_2 \  m^{K}(w,w) \quad \forall w \in \mathcal{Q}(K) \cap Ker(\mathcal{P}^0_K),
    \end{split}
\end{equation*}
where $Ker(\mathcal{P}^{\nabla}_K)$ and $Ker(\mathcal{P}^{0}_K)$ denote the null spaces of $\mathcal{P}^{\nabla}_K$ and $\mathcal{P}^{0}_K$, respectively. There are several choices of the projection operators in the literature \cite{mascotto2018ill,da2017high,dassi2018exploring}. However, we follow the construction provided in \cite{beirao2014hitchhiker}.} 

\answ{Note that the consistency part of the discrete bilinear forms $a_h(\cdot,\cdot)$ and $m_h(\cdot,\cdot)$ can be directly evaluated using the numerical quadrature.  However, in this study, we use the projection operators' matrix representation to evaluate the discrete bilinear form, thus circumventing the numerical quadrature. The projection operators' matrix elements can be assessed using the \answ{DOFs}\cite{beirao2014hitchhiker}.} 
\end{remark}
\textbf{Polynomial-consistency:} For all $h>0, ~ \forall\  K \in \mathcal{T}_{h}$, the bilinear form $a^{K}_{h}(\cdot,\cdot)$ and $m_h^K(\cdot,\cdot)$ \answ{satisfies} the following consistency property:
\begin{equation}
\begin{split}
a^{K}_{h}(q_{1},v_{h})&=a^{K}(q_{1},v_{h}), \\
m^K_h(q_{1},v_{h})&=(q_{1},v_{h})_K,
\label{poly_constncy}
\end{split}
\end{equation}
$\forall \  q_{1} \in \mathbb{P}^{1}(K)$ and  $v_{h} \in \mathcal{Q}(K)$. Further,  $a^K(u,v):=\int_K \nabla u \cdot \nabla v$ and $(u,v)_{K}:=\int_K u \ v$. \\

\textbf{Stability:} There exist two positive constants $\alpha_{\ast},\alpha^{\ast}$, independent of $h$ and $K$, such that, for all $v_{h}\in \mathcal{Q}(K)$, the following condition holds:
\begin{equation}
\begin{split}
\alpha_{\ast}\  a^{K}(v_{h},v_{h}) & \leq a^{K}_{h}(v_{h},v_{h})  \leq \alpha^{\ast} \  a^{K}(v_{h},v_{h})\\
\beta_{\ast}\  m^{K}(v_{h},v_{h}) & \leq m^{K}_{h}(v_{h},v_{h})  \leq \beta^{\ast}\  m^{K}(v_{h},v_{h}).
\label{stab1}
\end{split}
\end{equation}
The mass matrix associated with the bilinear form $m_{h}^{K}(\cdot,\cdot)$ assists to evaluate the nonlinear load term. Furthermore, the interpolation operator $I_h^{K}:H^{2}(K)\rightarrow \mathcal{Q}(K)$ and the polynomial approximation $m_{\pi}\in \mathbb{P}^1(K)$ of a function $m \in H^2(K)$ are introduced for each $K \in \mathcal{T}_h$. We briefly discuss the approximation properties of both the operators. For more details, interested readers are referred to~\cite{brenner2007mathematical,beirao2013basic}.
\begin{lemma}
\label{interpolation:SG}
For every $h>0,~K \in \mathcal{T}_h,~v \in H^2(K)$, the interpolant $I_h^{K} v \in \mathcal{Q}(K)$ satisfies
\begin{equation}
\label{interpol_1}
\|v-I_h^K v\|_{0,K}+h_K~|v-I_h^K~v|_{1,K} \leq~C h_K^2~|v|_{2,K},
\end{equation}
where $C$ is independent of the local mesh size $h_K$ but depends on the mesh regularity constant $\rho$.
\end{lemma}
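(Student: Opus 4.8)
The plan is to derive \eqref{interpol_1} by gluing together a polynomial approximation estimate on the star-shaped element $K$ with a stability bound for the virtual interpolation operator $I_h^K$, combined through the triangle inequality. As a preliminary step I would verify that $I_h^K v$ is well defined: since $K\subset\mathbb{R}^2$, the Sobolev embedding $H^2(K)\hookrightarrow C^0(\bar K)$ makes the vertex values of $v$ meaningful, and these are exactly the degrees of freedom $(\mathbf{d_1})$ of $\mathcal{Q}(K)$, so by unisolvence there is a unique $I_h^K v\in\mathcal{Q}(K)$ matching them. I would also record the key reproduction property: if $p\in\mathbb{P}_1(K)$ then $p\in\mathcal{Q}(K)$ and $p$ already interpolates its own vertex values, hence $I_h^K p=p$.

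Next, fix the polynomial $m_\pi:=\mathcal{P}^0_K v\in\mathbb{P}_1(K)$ (equivalently, an averaged Taylor polynomial of $v$ over the ball with respect to which $K$ is star-shaped). By the Bramble--Hilbert/Dupont--Scott estimate on domains star-shaped with respect to a ball — applicable here by part~(1) of Assumption~\ref{assum_1}, with the constant depending only on $\rho$ — one obtains
\[
\|v-m_\pi\|_{0,K}+h_K\,|v-m_\pi|_{1,K}\le C\,h_K^2\,|v|_{2,K}.
\]
Using the reproduction property, write $v-I_h^K v=(v-m_\pi)-I_h^K(v-m_\pi)$, so it suffices to bound $\|I_h^K w\|_{0,K}+h_K\,|I_h^K w|_{1,K}$ for $w:=v-m_\pi$ by the right-hand side above.

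For this stability estimate I would argue directly on $K$. On each edge $e\subset\partial K$, $I_h^K w|_e$ is the one-dimensional linear interpolant of $w|_e$ at the two endpoints, so classical $1$D interpolation together with a scaled trace inequality on $K$ controls $\|I_h^K w\|_{0,\partial K}$ and $|I_h^K w|_{1,\partial K}$ by $\|w\|_{0,K}$, $h_K|w|_{1,K}$ and $h_K^2|w|_{2,K}$ with the correct powers of $h_K$, uniformly in the mesh by part~(2) of Assumption~\ref{assum_1}. In the interior, $I_h^K w$ is characterized by $\Delta(I_h^K w)\in\mathbb{P}_1(K)$ together with the moment conditions defining $\mathcal{Q}(K)$; testing the associated variational problem and invoking a Poincar\'e inequality on the star-shaped element bounds $|I_h^K w|_{1,K}$, and then $\|I_h^K w\|_{0,K}$, in terms of the boundary data, closing the argument. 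A clean scaling to a fixed reference element is not available for general polygons, so all powers of $h_K$ must be tracked by hand and the mesh-regularity constants $\rho$ and $c$ enter precisely here; ensuring that every constant remains independent of $h_K$ in this interior estimate is the main technical obstacle. Substituting back and applying the triangle inequality then yields \eqref{interpol_1}, with $C=C(\rho)$ (and also depending on $c$).
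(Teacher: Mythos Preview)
Your proposal is correct in outline and follows the standard route for this estimate: write $v-I_h^K v=(v-m_\pi)-I_h^K(v-m_\pi)$ using the polynomial reproduction property of $I_h^K$, invoke the Bramble--Hilbert/Scott--Dupont bound on $v-m_\pi$ (this is exactly Lemma~\ref{poly_operator:SG}), and then establish a scaled stability estimate for $I_h^K$ by controlling the boundary trace (piecewise linear interpolant on $\partial K$, handled via scaled trace inequalities) and the interior through the variational characterisation of $\mathcal{Q}(K)$. You also correctly flag that the interior stability step for the \emph{modified} space $\mathcal{Q}(K)$ is the delicate point; in the literature this is typically handled either by first proving the estimate on the auxiliary enhanced space $\mathcal{G}(K)$ and then transferring it to $\mathcal{Q}(K)$ via the equivalence of degrees of freedom, or by sub-triangulating $K$ and using a Cl\'ement-type operator.

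The paper itself does not prove this lemma: its entire argument is the sentence ``See Proposition~4.2 in \cite{mora2015virtual} for more detail.'' So there is nothing to compare at the level of technique; your proposal essentially reconstructs the proof that the cited reference carries out, and in that sense provides far more than the paper does. The only caution is that your interior stability sketch (``testing the associated variational problem and invoking a Poincar\'e inequality'') is the step where the modified-space moment conditions interact with the biharmonic-type structure of $\mathcal{G}(K)$, and making this fully rigorous requires some care --- but this is a matter of filling in well-documented details from the VEM literature, not a genuine gap.
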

\begin{proof}
\answ{See~Proposition~4.2 in \cite{mora2015virtual} for more detail}.
\end{proof}
We define the global interpolation operator $I_{h}:
I_{h}(v_{h})|_{K}:=I_{h}^{K}(v_{h}|_{K}).$
In addition, according to the classical Scott-Dupont principle \answ{\cite{beirao2016virtual,dupont1980polynomial}} and exploiting Assumption~\ref{assum_1}, there is a polynomial approximation in $\mathbb{P}_{1}(K)$, for all star-shaped polygonal element $K$ that satisfies the following approximation property.
\begin{lemma}
\label{poly_operator:SG}
For every $h>0,~K \in \mathcal{T}_h,~m \in H^2(K)$, there exists a polynomial $m_{\pi} \in \mathbb{P}_1(K)$ such that
\begin{eqnarray}
\|m-m_{\pi} \|_{0,K} +h_{K} \ |m-m_{\pi}|_{1,K} \leq C h_{K}^2 |m|_{2,K},
\label{intpol_err2}
\end{eqnarray}
where the positive constant $C$ is independent of the mesh size $h_K$ and could be a function of mesh regularity constant $\rho$.
\end{lemma}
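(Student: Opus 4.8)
The plan is to take $m_\pi$ to be the degree-one \emph{averaged Taylor polynomial} of $m$ and then invoke the Bramble--Hilbert lemma, with Assumption~\ref{assum_1}(1) supplying the geometric control that keeps the constant independent of $h_K$. Concretely, by Assumption~\ref{assum_1}(1) the element $K$ is star-shaped with respect to a ball $B=B(x_0,r)$ with $r\ge \rho h_K$, while $\mathrm{diam}(K)=h_K$; hence the chunkiness ratio $h_K/r\le 1/\rho$ is uniformly bounded. Define
\[
m_\pi(x):=(Q^1 m)(x):=\int_B T^1_y m(x)\,\phi(y)\,dy,
\]
where $T^1_y m(x)$ is the first-order Taylor expansion of $m$ about $y$ and $\phi$ is a fixed smooth cut-off supported in $B$ with unit integral. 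Then $m_\pi\in\mathbb{P}_1(K)$ by construction, $Q^1 p=p$ for every $p\in\mathbb{P}_1(K)$, and $D^\alpha(Q^1 m)=Q^0(D^\alpha m)$ for $|\alpha|\le 1$, so the functional $m\mapsto m-Q^1 m$ annihilates $\mathbb{P}_1(K)$ and its second derivatives coincide with those of $m$.

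Next I would apply the Bramble--Hilbert lemma on $K$ (as in \cite{brenner2007mathematical}, together with the Scott--Dupont machinery of \cite{dupont1980polynomial,beirao2016virtual}): dilating $K$ to a unit-diameter configuration, using that $K$ is star-shaped with respect to $B$, one obtains on the normalized element a bound of the form $|m-Q^1 m|_{k}\le C\,|m|_2$ for $k=0,1$ with $C=C(\rho)$, and scaling back reinstates the powers of $h_K$:
\[
|m-m_\pi|_{k,K}\le C(\rho)\,h_K^{\,2-k}\,|m|_{2,K},\qquad k=0,1.
\]
Taking $k=0$ and adding $h_K$ times the case $k=1$ yields exactly \eqref{intpol_err2}, with a constant depending only on $\rho$ (through the chunkiness bound $1/\rho$) and on the fixed averaging kernel.

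The only genuinely delicate point is justifying that the Bramble--Hilbert constant depends on the geometry of $K$ \emph{solely} through $\rho$, and in particular not through the number of edges of $K$ or the ratios of their lengths. This is precisely where star-shapedness with respect to a ball of radius $\ge\rho h_K$ enters: after the dilation to unit diameter, the family of admissible normalized elements is compact in a suitable sense and the relevant polynomial-approximation operator is uniformly bounded on it. Since this argument is classical and written out in full in the cited references, I would invoke it rather than reproduce the calculation here.
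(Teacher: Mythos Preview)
Your argument is correct and is precisely the classical Scott--Dupont/Bramble--Hilbert approach that underlies the result; the paper itself does not give a proof but simply refers to Proposition~4.2 in \cite{beirao2013basic}. Your sketch via the averaged Taylor polynomial over the star-shapedness ball, together with the scaling argument showing the constant depends only on the chunkiness parameter $1/\rho$, is exactly what that reference establishes.
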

\begin{proof}
 See Proposition~4.2 in \cite{beirao2013basic} for more detail.
\end{proof}
\begin{remark}
The bilinear forms $a_h^{K}(\cdot,\cdot)$ and $m_h^{K}(\cdot,\cdot)$ are symmetric positive (semi) definite on $\mathcal{Q}(K) \times \mathcal{Q}(K)$ and can be computed from the \answ{DOFs}. These bilinear forms consist of a polynomial and a nonpolynomial parts, which reduce to the analogous stiffness and mass matrix of the FEM for polynomial functions. A detailed discussion on the construction of stiffness and mass matrix can be found in \cite{cangiani2016conforming,beirao2014hitchhiker}.   
\end{remark}

\subsection{Construction of the nonlinear source term}
\label{comput_nonlinear}
In this section, we present a method to discretize the nonlinear load term exploiting the product approximation technique. Making use of analogous ideas as in~\cite{argyris1991finite}, we approximate the forcing vector (nonlinear load term). Given $\boldsymbol{\eta}=\{ \eta_1, \eta_2,\dots, \eta_{N_K}\}$, the local basis functions and $\mathcal{L}:= \{ l_1,l_2, \dots, l_{N_K}\}$, the \answ{DOFs} associated with the VEM space $\mathcal{Q}(K)$ over a polygon $K$, the product approximation for the forcing term is defined as 
\begin{equation}
\begin{split}
    f_h(u_h)|_{K}:=\mathcal{P}^{0}_{K}(I_h^{K}(-\sin(u_h))). \label{tic1}
\end{split}
\end{equation}
Employing approximation \eqref{tic1} and the orthogonality property of $\mathcal{P}^0_K$, the load term reduces to the following simple matrix structure:
\begin{equation}
\begin{split}
    F_h(\eta_i):=(f_h(u_h),\eta_i) =\int_\Omega \mathcal{P}^0I_h(f(u_h))~\eta_i & =\int_\Omega I_h(-\sin(u_h))~\mathcal{P}^{0}\eta_i \\
    & =\sum_{K \in \mathcal{T}_h} \int_K \sum_{j=1}^{N_K} l_j(-\sin(u_h))\ \eta_j\ \mathcal{P}^{0}_{K}\eta_i \\
    & = -\sum_{K \in \mathcal{T}_h} \sum_{j=1}^{N_K} \sin(u_h(V_j)) \int_K \mathcal{P}^{0}_{K}\eta_j\ \mathcal{P}^{0}_{K}\eta_i,
    \label{pa:SG}
    \end{split}
\end{equation}
where $V_j$ denotes the $j^{\rm th}$ vertex of a polygon $K$. Now, by varying the basis function $\eta_i$, $1 \leq i \leq N^{dof} $, we define the discrete forcing vector as:
\begin{equation}
    \tilde{f}:=\bar{\mathbf{M}}\ \bar{\mathcal{L}}(-\sin(u_h)),
    \label{nonlin_1}
\end{equation}
where $N^{dof}$ and $\bar{\mathcal{L}}$ denotes the dimension of the VEM space $\mathcal{Q}_h$ and the global \answ{DOFs}, respectively  and $(\bar{\mathbf{M}})_{ij}$=$\int_\Omega \mathcal{P}^{0} \eta_i \  \mathcal{P}^{0} \eta_j $. Further, $\bar{\mathcal{L}}(-\sin(u_h))$ denotes the column vector containing the values of $-\sin(u_h)$ at all the vertices.

\begin{remark}
From the definition of $f_h(u_h)$, it can easily be concluded that the linear functional $F_h(v_h):=(f_h(u_h),v_h)$ is computable from the \answ{DOFs} associated with the virtual element space $\mathcal{Q}_h$. Since the \answ{DOFs} of the linear virtual element space are evaluation of functions at the vertices, $l_j(\sin(u_h))=\sin(u_h(V_j))$ and therefore the unknowns $u_h(V_j)$ can be computed by standard iteration techniques such as the fixed point iteration technique or Newton's method with a suitable initial guess. However, this formulation may not  work for high order virtual element space, since the high order virtual element space ($k\geq2$) deals with cell moments.
\end{remark}

\subsection{Discrete scheme}
We employ the linear VEM for the discretization of the space variable and the Crank-Nicolson scheme for the temporal variable. Using  \eqref{temp6} and \eqref{mass1:SG}, the semidiscrete formulation of model problem \eqref{cont_weak:SG} reads as: find $u_h \in \mathcal{C}^0(0,T;\mathcal{Q}_h) \cap \mathcal{C}^1(0,T;\mathcal{Q}_h)$ such that~\cite{thomee1984galerkin,vacca2016virtual}:
\begin{equation}
\label{dis_modl:SG}
    \left \{
    \begin{aligned}
    & m_h(D^2_t u_h,v_h)+ \gamma \  m_h(D_t u_h,v_h)+a_h(u_h,v_h)=F_h(v_h) \  \forall v_h \in \mathcal{Q}_h, \text{for a.e.} \ 
     t \in (0,T],  \\
    & u_h(0)=h_0(x,y), \\
    & D_t u_h(0)=g_0(x,y),
    \end{aligned}
    \right.
\end{equation}
where $h_0(x,y)$ and $g_0(x,y)$ denote the interpolant of $h(x,y)$ and $g(x,y)$ on the virtual element space $\mathcal{Q}_h$. The existence and the uniqueness of the discrete solution $u_h$ follows from the fact that equations \eqref{dis_modl:SG} are equivalent to a nonlinear initial value problem of second order and $\sin(u)$ is globally Lipschitz continuous in $u$. Moreover, $F_h$ represents the discrete load term defined in \eqref{pa:SG}. Let $N \in \mathbb{N}$ be a positive integer and consider the time step $\Delta t:=T/N$ and for each $t_n=n \Delta t$, we define $u_h^n:=u_h(\cdot,t_n)$, where  $n=0,1,\ldots,N$. By employing the $\theta$-weighted scheme for the time discretization and the linear VEM for the spatial discretization, the fully discrete scheme is given by: find $u_h^{n+2} \in \mathcal{Q}_h$ such that
\begin{equation}
\left \{
    \begin{aligned}
        & m_h \Big(\frac{u_h^{n+2}-2\ u_h^{n+1}+u_h^n}{(\Delta t)^2},v_h \Big)  +  \gamma\  m_h\Big (\frac{u_h^{n+2}-u_h^{n}}{ 2 \ \Delta t},v_h \Big)+ \theta \  a_h(u_h^{n+2},v_h) \\
        & \quad +(1-\theta)\  a_h(u_h^{n},v_h) 
         = \Big( \ \theta F_h^{n+2}(v_h)+(1-\theta)F_h^{n}(v_h) \Big ) \ \forall \ v_h \in \mathcal{Q}_h,\\
         & u_h(0)=h_0(x,y), \quad \quad D_tu_h(0)=g_0(x,y). 
    \end{aligned}
    \right .
    \label{full_dis_1}
\end{equation}
where $F_h^{n}(v_h)=(f_h(u_h^{n}),v_h)$. In particular, for $\theta=$ 1/2, scheme \eqref{full_dis_1} reduces to the Crank-Nicolson scheme, which is unconditionally stable~\cite{li2008computational}. Using \eqref{nonlin_1}, Equation \eqref{full_dis_1} reduces to, in matrix form:
\begin{equation}
    \begin{split}
        & \bigg[ \Big(1+\frac{\gamma\ \Delta t}{2} \Big)\ \textbf{M}+(\Delta t^2/2)\  \textbf{A} \bigg]\ \mathbf{u}^{n+2}+ (\Delta t^2/2)   ~\answ{\bar{\mathbf{M}}}\ \sin(\mathbf{u}^{n+2}) =  2 \textbf{M}~\mathbf{u}^{n+1} \\
    & \quad -(\Delta t^2/2)~\answ{\bar{\mathbf{M}}}\ \sin(\mathbf{u}^{n})-\Big[ \Big (1- \frac{\gamma \Delta t}{2} \Big) \textbf{M} +(\Delta t^2/2) ~\textbf{A} \Big ]  \mathbf{u}^{n}.
    \label{cn2}
    \end{split}
\end{equation}
Equation \eqref{cn2} represents a system of nonlinear equations and we employ the Newton's method to solve the above system of equations. Since the computation of the matrices, $\textbf{M},~\bar{\mathbf{M}}$, and $\textbf{A}$ are done on the VEM space $\mathcal{Q}_h$ using the \answ{DOFs}, we can readily calculate \eqref{cn2} for any time step $t_n$. The residual vector is given by:
\begin{equation}
    \begin{split}
        & \boldsymbol{\mathcal{R}}(\bold{u^{n+2}}):=\bigg[ \Big(1+\frac{\gamma\ \Delta t}{2}\Big)\ \textbf{M}+(\Delta t^2/2)\  \textbf{A} \bigg]\ \mathbf{u}^{n+2}+ (\Delta t^2/2)\ \answ{\bar{\mathbf{M}}}\ \sin(\mathbf{u}^{n+2}) \\
        & \quad- 2 \textbf{M} \mathbf{u}^{n+1}+ (\Delta t^2/2)~\answ{\bar{\mathbf{M}}}\ \sin(\mathbf{u}^{n})+\Big[ \Big(1- \frac{\gamma \Delta t}{2}  \Big)\textbf{M}+(\Delta t^2/2) \textbf{A}\Big ]~\mathbf{u}^{n}.
    \end{split}
    \label{residual}
\end{equation}
and the Jacobian matrix corresponding to \eqref{cn2} is given by: 
\begin{equation}
\begin{split}
    \mathcal{J}_{ij}:=\frac{\partial \ \boldsymbol{\mathcal{R}}(\bold{u^{n+2}})_{i}  }{\partial \bold{u^{n+2}}_j}=\bigg[ \Big(1+\frac{\gamma\ \Delta t}{2} \Big )m_{ij}+(\Delta t^2/2) ~a_{ij} \bigg]+(\Delta t^2/2)~\bar{m_{ij}} \cos(\bold{u^{n+2}}_j),
    \end{split}
    \label{jacob}
\end{equation}
where, $m_{ij}$, $a_{ij}$ and $\bar{m_{ij}}$ are the elements of $\textbf{M}$, $\textbf{A}$ and $\bar{\mathbf{M}}$, respectively. We deduce from \eqref{residual} and \eqref{jacob} that the residual vector and the Jacobian are numerical \answ{integration free} and can be considered as readily computable matrix product. By employing the initial  condition  \eqref{cont_weak:SG}, $u^1$ is approximated as:
\begin{equation*}
    \frac{u^{1}(x,y)-u^{0}(x,y)}{ \Delta t}=I_h(g(x,y)).
\end{equation*}

\subsection{Comparison with an existing technique}
Here, we briefly recollect the approximation proposed for the load term $F_h(v_h)$ in \cite{adak2019convergence}. Employing the orthogonality property of the local $L^2$ projection operator $\mathcal{P}^{0}_{K}$, the nonlinear load term can be computed as
\begin{equation}
\begin{split}
F_h(v_h) &:=\sum_{K \in \mathcal{T}_h} \int_{K}  f_h(u_h)~v_h \\
                   & = \sum_{K \in \mathcal{T}_h} \int_{K}  -\sin \Big(\sum_{i=1}^{N_K} l_i(u_h)~ \mathcal{P}^{0}_{K} \eta_i \Big)~\mathcal{P}^{0}_{K} v_h,
\end{split}
\label{nonlin:semi}
\end{equation}
where $N_K$ denotes the total number of \answ{DOFs} of virtual space $\mathcal{Q}(K)$. Thus, if we replace the test function $v_h$ by $\eta_j$, we obtain the polynomial representation of the nonlinear load term, which is computable from the information provided by the \answ{DOFs} associated with the virtual element space $\mathcal{Q}(K)$. Further, the Jacobian matrix corresponding to approximation \eqref{nonlin:semi} can be computed as: 
\begin{equation}
\begin{split}
    & \mathcal{J}_{ij}=\Big[ \Big(1+\frac{\gamma\ \Delta t}{2}\Big)m_{ij}+(\Delta t^2/2) ~a_{ij} \Big] \\
    & \quad +(\Delta t^2/2)~\sum_{K\in \mathcal{T}_h} \int_K \cos \Big(\sum_{j=1}^{N_K} u^{n+2}_j~ \mathcal{P}^{0}_{K} \eta_j\Big)~\mathcal{P}^{0}_{K} \eta_j~\mathcal{P}^{0}_{K} \eta_i.
    \end{split}
    \label{jacob:2}
\end{equation}
It is seen that the above procedure involves integration over the element to compute the Jacobian, moreover, this has to be computed for each time step. Therefore, it can be inferred that the estimation of $\mathcal{J}_{ij}$ is computationally expensive when compared to that of \eqref{jacob}.

\section{Convergence Analysis:}
\label{converge:sg}
In this section, \answ{we} explore \answ{\textit{a priori}} error estimates for the semi-discrete and the fully discrete system in the $L^2$ norm and the $H^1$ seminorm. Since the force function $f(u)$ depends on $u$, the estimates will depend on the regularity of the discrete solution $u_h$ and $f(u_h)$.  In (Lemma~3 in \cite{larsson1989interpolation}, Lemma~2.1 and 2.3 in \cite{chen1989error}), the authors proved that $\|u_h\|_{L^2(0,T;H^2(\Omega))} < \infty$ and $\|f(u_h)\|_{L^2(0,T;H^2(\Omega))} < \infty$. However, to emphasize the estimate in terms of regularity of the exact solution $u$ and $f(u)$, we assume that  $\|u\|_{L^2(0,T;H^2(\Omega))} < \infty$ and $\|f(u)\|_{L^2(0,T;H^2(\Omega))} < \infty$. In connection with this assumptions, we introduce the elliptic projection operator $P^h:H^1_0(\Omega) \rightarrow \mathcal{Q}_h$, which is defined by
\begin{equation*}
    a_h(P^h u,v_h)=a(u,v_h) \quad \forall v_h \in \mathcal{Q}_h.
\end{equation*}
Now, we proceed to discuss the approximation properties of $P^h$, which will be utilized in the convergence analysis. Following Lemma~3.1 in \cite{vacca2016virtual}, we can derive the results of the following Lemma.
\begin{lemma}
\label{ell_pro:SG}
Let $u \in H^1_0(\Omega) \cap H^2(\Omega)$ and the domain $\Omega$ be convex. Then, there exists a generic constant $C$ independent of the mesh size $h$ such that
 \begin{equation}
    \|P^hu-u\|_0 \leq~C\ h^{2}~|u|_{2}, \quad \quad |P^h u-u|_1 \leq~C~h^{1}~|u|_{2}.
\end{equation}
\end{lemma}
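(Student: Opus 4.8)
The plan is the classical two-step route adapted to the virtual element setting: first derive the $H^1$-seminorm estimate by a C\'ea/Strang-type argument, then obtain the $L^2$ estimate by an Aubin--Nitsche duality argument that uses the convexity of $\Omega$ through elliptic $H^2$-regularity. Throughout I rely on the fact that $a_h(\cdot,\cdot)$ is coercive and bounded on $\mathcal{Q}_h$ (a consequence of the stability property \eqref{stab1}), so that the projection $P^h$ is well defined; on the identity $a^K(v,v)=|v|_{1,K}^2$ (since $a^K(u,v)=\int_K\nabla u\cdot\nabla v$); and on the approximation estimates of Lemma~\ref{interpolation:SG} and Lemma~\ref{poly_operator:SG}. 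I also use the polynomial consistency \eqref{poly_constncy}, $a_h^K(q_1,v_h)=a^K(q_1,v_h)$ for $q_1\in\mathbb{P}_1(K)$, together with the symmetry of $a_h^K$.

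\textbf{Step 1 ($H^1$ bound).} Write $P^hu-u=\delta_h+(I_hu-u)$ with $\delta_h:=P^hu-I_hu\in\mathcal{Q}_h$; the term $I_hu-u$ is controlled directly by Lemma~\ref{interpolation:SG}. For $\delta_h$, start from coercivity and the definition of $P^h$,
\[
\alpha_\ast\,|\delta_h|_1^2\le a_h(\delta_h,\delta_h)=a_h(P^hu,\delta_h)-a_h(I_hu,\delta_h)=a(u,\delta_h)-a_h(I_hu,\delta_h).
\]
On each $K\in\mathcal{T}_h$ insert the polynomial $u_\pi\in\mathbb{P}_1(K)$ from Lemma~\ref{poly_operator:SG} and use \eqref{poly_constncy}, $a_h^K(u_\pi,\delta_h)=a^K(u_\pi,\delta_h)$, to obtain
\[
a(u,\delta_h)-a_h(I_hu,\delta_h)=\sum_{K\in\mathcal{T}_h}\Big(a^K(u-u_\pi,\delta_h)-a_h^K(I_hu-u_\pi,\delta_h)\Big).
\]
Each summand is bounded by Cauchy--Schwarz (for $a_h^K$ using the upper bound in \eqref{stab1}), the triangle inequality $|I_hu-u_\pi|_{1,K}\le|I_hu-u|_{1,K}+|u-u_\pi|_{1,K}$, and Lemmas~\ref{interpolation:SG}--\ref{poly_operator:SG}; a discrete Cauchy--Schwarz over $K$ then gives $a_h(\delta_h,\delta_h)\le C\,h\,|u|_2\,|\delta_h|_1$. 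Hence $|\delta_h|_1\le C\,h\,|u|_2$, and the asserted $H^1$ bound follows by the triangle inequality.

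\textbf{Step 2 ($L^2$ bound).} Set $e:=P^hu-u$ and let $\psi\in H^1_0(\Omega)$ solve the dual problem $a(\psi,v)=(e,v)$ for all $v\in H^1_0(\Omega)$. Since $\Omega$ is convex, elliptic regularity yields $\psi\in H^2(\Omega)$ with $|\psi|_2\le C\|e\|_0$. Then
\[
\|e\|_0^2=a(\psi,e)=a(\psi-I_h\psi,e)+a(I_h\psi,e).
\]
The first term is bounded by $|\psi-I_h\psi|_1\,|e|_1\le C\,h\,|\psi|_2\cdot C\,h\,|u|_2$ by Lemma~\ref{interpolation:SG} and Step~1. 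For the second term, use $a(I_h\psi,u)=a(u,I_h\psi)=a_h(P^hu,I_h\psi)$ from the definition of $P^h$, so that $a(I_h\psi,e)=\sum_{K}\big(a^K(I_h\psi,P^hu)-a_h^K(P^hu,I_h\psi)\big)$; inserting the polynomials $\psi_\pi$ and $u_\pi$ on each $K$ and repeatedly invoking \eqref{poly_constncy} together with the symmetry of $a_h^K$ collapses this to $\sum_{K}\big(a^K(I_h\psi-\psi_\pi,P^hu-u_\pi)-a_h^K(P^hu-u_\pi,I_h\psi-\psi_\pi)\big)$, which is bounded by $(1+\alpha^\ast)\sum_{K}|I_h\psi-\psi_\pi|_{1,K}\,|P^hu-u_\pi|_{1,K}$. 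Using $|I_h\psi-\psi_\pi|_{1,K}\le C\,h_K\,|\psi|_{2,K}$ and $\sum_{K}|P^hu-u_\pi|_{1,K}^2\le C\,h^2\,|u|_2^2$ (from Step~1 and Lemma~\ref{poly_operator:SG}) followed by a discrete Cauchy--Schwarz gives $a(I_h\psi,e)\le C\,h^2\,|\psi|_2\,|u|_2$. Combining the two terms, $\|e\|_0^2\le C\,h^2\,|u|_2\,\|e\|_0$, i.e. $\|e\|_0\le C\,h^2\,|u|_2$.

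\textbf{Main obstacle.} The delicate part is the consistency-error bookkeeping in Step~2: one must insert polynomial approximants for \emph{both} $u$ and the dual solution $\psi$ in the right order and apply the polynomial consistency \eqref{poly_constncy} (with the symmetry of $a_h^K$) so that the non-computable contributions cancel, leaving only products of quantities that are $O(h)$ by Lemmas~\ref{interpolation:SG}--\ref{poly_operator:SG}. The second ingredient, elliptic $H^2$-regularity on the convex polygon $\Omega$, is what supplies the extra power of $h$ in the $L^2$ estimate.
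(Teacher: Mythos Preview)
Your argument is correct: the Strang-type estimate in Step~1 and the Aubin--Nitsche duality in Step~2 (with the double insertion of polynomial approximants $u_\pi,\psi_\pi$ to exploit \eqref{poly_constncy}) are exactly the standard route that underlies this result. The paper itself does not give a proof but simply refers to Lemma~3.1 of \cite{vacca2016virtual}, whose proof follows precisely the C\'ea/Strang plus duality strategy you carry out, so your approach coincides with the cited one.
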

Upon employing the projection operator $P^h$, we split the term $u-u_h$ as follows:
\begin{equation}
\begin{split}
    u-u_h &=u-P^h u+P^h u-u_h \\
          &=: \rho_1-\rho_2.
\end{split}
\end{equation}
The estimation of $\rho_1$ can be easily done by Lemma~\ref{ell_pro:SG}. To bound the term $\rho_2 (= u_h-P^h u)$, we employ the semi-discrete formulation~\eqref{dis_modl:SG} and the definition of the operator $P^h$. Note that most of the arguments are adopted from \cite{baker1980multistep} and incorporated in the VEM settings. We explicitly address below the boundedness of the nonlinear load term.

\subsection{Optimal $L^2$ error estimates}
\begin{theorem}
\label{th1}
Let $u \in L^2(0,T;H^2(\Omega))$ be the solution of \ (\ref{cont_weak:SG}) and $u_{h}$ be the solution of (\ref{dis_modl:SG}), and assume that the $f(u)\in L^{2}(0,T;H^{2}(\Omega))$, $D_tu \in L^{2}(0,T;H^{2}(\Omega))$, $D_t^2u \in L^{2}(0,T;H^{2}(\Omega))$, $u_0 \in H^{2}(\Omega)$ and $D_t u(0) \in H^{2}(\Omega)$. Additionally, let $u_h(0)=I_h(u_0)$ and $D_tu_{h}(0)=I_h(D_t u(0))$, where $I_h$ is the interpolation operator defined in \cite{beirao2014hitchhiker}. Then, $\forall \ t \in [0,T]$, the following estimate holds :
\begin{equation}
\begin{split}
\|(u-u_{h})(t)\| &\leq C~\Big(\|u_{h,0}-u_{0}\|+\|D_{t} (u-u_h)(0)\|\Big)+C~h^{2}~\Big(\|u\|_{L^{2}(0,T,H^{2}(\Omega))}
\\ & \quad +|u_0|_{2} +\|D_tu\|_{L^{2}(0,T,H^{2}(\Omega))} +\|D^2_{t}u\|_{L^{2}(0,T,H^{2}(\Omega))} \\
& \quad +\|f(u)\|_{L^{2}(0,T,H^{2}(\Omega))}\Big), 
\end{split}
\end{equation}
\answ{where $\|\cdot \|$ denotes $L^2(\Omega)$ norm}.
\end{theorem}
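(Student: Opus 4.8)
The plan is to estimate $u - u_h$ via the standard Galerkin-orthogonality splitting already announced in the excerpt, $u - u_h = \rho_1 - \rho_2$ with $\rho_1 = u - P^h u$ and $\rho_2 = u_h - P^h u$. The term $\rho_1$ is controlled immediately by Lemma~\ref{ell_pro:SG}, giving an $O(h^2)$ bound in $L^2(\Omega)$ in terms of $|u|_2$ (integrated in time this contributes the $h^2 \|u\|_{L^2(0,T;H^2(\Omega))}$ piece). So the real work is bounding $\rho_2 \in \mathcal{Q}_h$. First I would derive the error equation satisfied by $\rho_2$: subtract the semidiscrete scheme \eqref{dis_modl:SG} from the weak form \eqref{cont_weak:SG} tested against $v_h \in \mathcal{Q}_h$, insert $P^h u$, and use $a_h(P^h u, v_h) = a(u,v_h)$ to kill the stiffness term. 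This leaves
\[
m_h(D_t^2 \rho_2, v_h) + \gamma\, m_h(D_t \rho_2, v_h) = m_h(D_t^2 P^h u, v_h) + \gamma\, m_h(D_t P^h u, v_h) - (D_t^2 u, v_h) - \gamma(D_t u, v_h) + F_h(v_h) - (f(u), v_h),
\]
i.e. the right-hand side collects (a) mass-term consistency errors $m_h(D_t^2 P^h u, v_h) - (D_t^2 u, v_h)$ and the analogous first-order term, which are handled by inserting $\mathcal{P}^0$ / interpolation, using the polynomial consistency \eqref{poly_constncy}, the stability \eqref{stab1}, and the approximation Lemmas~\ref{interpolation:SG}–\ref{poly_operator:SG} together with Lemma~\ref{ell_pro:SG} (these produce the $h^2\|D_t u\|$ and $h^2\|D_t^2 u\|$ terms), and (b) the nonlinear consistency error $F_h(v_h) - (f(u),v_h)$.

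The main obstacle is term (b), the nonlinear source error. I would split it as
\[
F_h(v_h) - (f(u), v_h) = \big(f_h(u_h) - \mathcal{P}^0 I_h f(u_h), v_h\big) + \big(\mathcal{P}^0 I_h f(u_h) - f(u_h), v_h\big) + \big(f(u_h) - f(u), v_h\big),
\]
where the first bracket vanishes by the very definition \eqref{tic1} of $f_h$. The middle bracket is a pure approximation error of the product-approximation operator $\mathcal{P}^0 I_h$ applied to $f(u_h) = -\sin u_h$; using the $L^2$-orthogonality of $\mathcal{P}^0$, the interpolation estimate \eqref{interpol_1} and the polynomial estimate \eqref{intpol_err2}, this is $O(h^2 |f(u_h)|_2)$, and invoking the regularity assumption $\|f(u_h)\|_{L^2(0,T;H^2(\Omega))} < \infty$ (or rather replacing it by $\|f(u)\|_{L^2(0,T;H^2(\Omega))}$ as stated, up to absorbing a harmless difference) yields the $h^2 \|f(u)\|_{L^2(0,T;H^2(\Omega))}$ contribution. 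The last bracket is where Lipschitz continuity of $f$ enters: $|(f(u_h) - f(u), v_h)| \le L\|u - u_h\|\,\|v_h\| \le L(\|\rho_1\| + \|\rho_2\|)\|v_h\|$, so it feeds $\|\rho_2\|$ (and the already-estimated $\|\rho_1\|$) back into the right-hand side — this is the coupling that will ultimately be closed by Gr\"onwall.

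With the error equation in hand I would run the standard energy argument: choose the test function $v_h = D_t \rho_2$ (the discrete analogue of multiplying by $\dot\rho_2$), so that $m_h(D_t^2\rho_2, D_t\rho_2) = \tfrac12 \tfrac{d}{dt} m_h(D_t\rho_2, D_t\rho_2)$ and the damping term $\gamma\, m_h(D_t\rho_2, D_t\rho_2) \ge 0$ can be dropped; there is no stiffness term for $\rho_2$ here since it was eliminated by $P^h$, but if one prefers to also control $|\rho_2|_1$ one can instead keep $\theta a_h$-type terms — for the pure $L^2$ estimate it is cleaner to bound $\|\rho_2\|$ from $\|D_t\rho_2\|$ via $\rho_2(t) = \rho_2(0) + \int_0^t D_t\rho_2\,ds$ and the triangle inequality. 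Integrating in time, applying Cauchy–Schwarz and Young's inequality on every right-hand term, using the mass-matrix norm equivalence \eqref{stab1} to pass between $m_h(D_t\rho_2, D_t\rho_2)$ and $\|D_t\rho_2\|^2$, and then applying Gr\"onwall's inequality absorbs the $\|\rho_2\|$-term from the Lipschitz piece and gives
\[
\|D_t\rho_2(t)\| + \|\rho_2(t)\| \le C\big(\|\rho_2(0)\| + \|D_t\rho_2(0)\|\big) + C h^2\big(\|u\|_{L^2(0,T;H^2)} + |u_0|_2 + \|D_t u\|_{L^2(0,T;H^2)} + \|D_t^2 u\|_{L^2(0,T;H^2)} + \|f(u)\|_{L^2(0,T;H^2)}\big).
\]
Finally I would bound the initial data: $\|\rho_2(0)\| = \|u_h(0) - P^h u(0)\| \le \|u_h(0) - u_0\| + \|u_0 - P^h u_0\| \le \|u_{h,0} - u_0\| + Ch^2|u_0|_2$ by Lemma~\ref{ell_pro:SG}, and similarly for $\|D_t\rho_2(0)\|$ in terms of $\|D_t(u-u_h)(0)\|$ plus an $O(h^2)$ term; combining with the bound on $\rho_1$ via the triangle inequality $\|(u-u_h)(t)\| \le \|\rho_1(t)\| + \|\rho_2(t)\|$ gives exactly the asserted estimate. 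The one point requiring care, besides the Gr\"onwall closure, is the legitimacy of replacing $|f(u_h)|_2$ by a quantity controlled by $\|f(u)\|_{L^2(0,T;H^2)}$; this is where the cited regularity results (Lemma~3 in \cite{larsson1989interpolation}, Lemmas~2.1, 2.3 in \cite{chen1989error}) together with the stated assumption on $f(u)$ are used.
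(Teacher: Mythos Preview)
Your error equation is wrong, and the test-function choice you propose would not deliver the optimal $L^2$ rate even if it were right.

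First, the stiffness term is \emph{not} eliminated. The elliptic projection satisfies $a_h(P^h u,v_h)=a(u,v_h)$, so when you subtract \eqref{cont_weak:SG} from \eqref{dis_modl:SG} the stiffness consistency error vanishes, but the difference $a_h(u_h,v_h)-a(u,v_h)=a_h(u_h-P^hu,v_h)=a_h(\rho_2,v_h)$ remains on the left-hand side. The correct error equation is the paper's \eqref{l2err_1:SG}, with $a_h(\rho_2,v_h)$ present.

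Second, with that term present, testing by $v_h=D_t\rho_2$ gives the energy $\tfrac12\frac{d}{dt}\big(\|D_t\rho_2\|_h^2+a_h(\rho_2,\rho_2)\big)$; after time integration the initial contribution $a_h(\rho_2(0),\rho_2(0))\sim|\rho_2(0)|_1^2$ appears. Under the theorem's hypotheses $u_h(0)=I_hu_0$, one only has $|\rho_2(0)|_1=|I_hu_0-P^hu_0|_1=O(h)$, so $\|D_t\rho_2(t)\|=O(h)$ and your route $\|\rho_2(t)\|\le\|\rho_2(0)\|+\int_0^t\|D_t\rho_2\|$ yields only $O(h)$, not $O(h^2)$. (This is exactly why the paper's Theorem~\ref{th2}, which does test with $D_t\rho_2$, is an $H^1$ estimate carrying an $O(h)$ term.) The paper's $L^2$ proof instead uses Baker's device: set $\hat\rho_2(t)=\int_t^\xi\rho_2(s)\,ds$ and test with $v_h=\hat\rho_2$. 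Then $a_h(\rho_2,\hat\rho_2)=-\tfrac12\frac{d}{dt}a_h(\hat\rho_2,\hat\rho_2)$, and since $\hat\rho_2(\xi)=0$ the integrated stiffness contribution is $+\tfrac12 a_h(\hat\rho_2(0),\hat\rho_2(0))\ge0$, which is discarded; no $|\rho_2(0)|_1$ ever enters, and one obtains \eqref{l2err_5:SG} with only $\|\rho_2(0)\|_h^2$ as initial data.

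A smaller issue: your middle bracket in the nonlinear splitting requires $|f(u_h)|_2$, which you then try to swap for $|f(u)|_2$ by handwaving. The paper's splitting \eqref{l2err_9:SG} inserts $\mathcal P^0 I_h f(u)$ and $\mathcal P^0 f(u)$ instead, so only $|f(u)|_2$ and $|u|_2$ appear; the Lipschitz step is done at the nodal level via $I_h\big(f(u_h)-f(u)\big)$ and the mean-value theorem (\eqref{l2err_10:SG}--\eqref{l2err_11:SG}), avoiding discrete-solution regularity altogether.
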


\begin{proof}
Using \eqref{cont_weak:SG} and \eqref{dis_modl:SG}, we can write
    \begin{equation}
\begin{split}
 m_{h}(D_{t}^{2}\rho_2(t), v_{h})+\gamma~m_h(D_t \rho_2(t), v_h) +a_{h}(\rho_2(t),v_{h})& =  (f_{h}(u_{h})-f(u),v_{h}) \\
 & \quad +(D^2_{t}u(t),v_{h})-m_{h}(P^hD_{t}^{2}u(t),v_{h}) \\
 & \quad + \gamma~(D_{t}u(t),v_{h})-\gamma~m_h(P^hD_{t}u(t),v_{h}).
\label{l2err_1:SG}
\end{split}
\end{equation}
\answ{Since the derivative in the time commutes with $m_h(\cdot,\cdot)$ and $(\cdot,\cdot)$, Equation~\eqref{l2err_1:SG} can be rewritten as follows:}
\begin{equation}
    \begin{split}
        -m_{h}(D_{t}\rho_2(t),D_{t}v_{h})+\gamma~m_h(D_t \rho_2(t), v_h )& +a_{h}(\rho_2(t),v_{h}) \\
        & = \answ{D_t} m_{h}(D_{t}(u-u_{h}),v_{h})+ \answ{D_t}(A_{1}(t),v_{h}) \\ & \quad-m_{h}(D_t^2 \rho_1,v_{h})
- m_{h}(D_{t}\rho_1,D_{t}v_{h})-(A_{1}(t),\answ{D_t}v_{h}) \\ & \quad +\answ{D_t}(A_{2}(t),v_{h}) -(A_{2}(t),\answ{D_t}v_{h}) \\ & \quad -\answ{D_t}m_{h}(A_{3}(t),v_{h})+m_{h}(A_{3}(t),\answ{D_t}v_{h})\\
& \quad + \gamma~\answ{D_t}(A_{4}(t),v_{h}) - \gamma \ (A_{4}(t),\answ{D_t}v_{h}) \\
& \quad -\gamma~\answ{D_t}m_h(A_{5}(t),v_{h}) + \gamma \ (A_{5}(t),\answ{D_t}v_{h}),
\label{l2err_2:SG}
    \end{split}
\end{equation}
where $A_1(t),A_2(t),A_3(t),A_4(t)$ and $A_5(t)$ are given by
\begin{equation}
\begin{split}
A_{1}(t)&:=\int^{t}_{0}(f_h(u_h)-f(u))(s)ds; \quad
 A_2(t):=\int^{t}_{0} D^2_{t}u(s)ds; \\
 A_3(t)&:=\int^t_{0} P^hD^2_{t}u(s)ds; \quad A_4(t):=\int^{t}_{0} D_{t}u(s)ds; \\
 A_5(t)&:=\int^t_{0} P^hD_{t}u(s)ds.
\label{l2err_3:SG}
\end{split}
\end{equation}
Define  $\hat{\rho_2}(t):=\int_{t}^{\xi} \rho_2(s) ds $, where  $\xi \in (0,T]$. Upon substituting $v_h=\hat{\rho_2}(t)$ into \eqref{l2err_2:SG}, and noting that $D_{t} v_h = -\rho_2(t)$, we obtain:
\begin{equation}
\begin{split}
 m_{h}(D_{t} \rho_2(t),\rho_2(t))+ &\gamma \answ{D_t} m_h(\rho_2(t),\hat{\rho_2}(t)) +\gamma~m_h(\rho_2(t),\rho_2(t))+a_{h}(\rho_2(t),\hat{\rho_2}(t)) \\
 &=\answ{D_t} m_h(D_t(u-u_h),\hat{\rho_2}(t))+\answ{D_t}(A_1(t),\hat{\rho_2}(t)) \\ & \quad -m_{h}(D^2_t\rho_1,\hat{\rho_2}(t))
 -m_h(D_t \rho_1,D_{t}\hat{\rho_2}(t))-(A_1(t),\answ{D_t} \hat{\rho_2}(t)) \\ & \quad +\answ{D_t}(A_2(t),\hat{\rho_2}(t)) -(A_2(t),\answ{D_t}\hat{\rho_2}(t)) \\ & \quad -\answ{D_t}m_h(A_3(t),\hat{\rho_2}(t))+m_h(A_3(t),\answ{D_t}\hat{\rho_2}(t))\\
 & \quad + \gamma \answ{D_t}(A_4(t),\hat{\rho_2}(t)) - \gamma (A_4(t),\answ{D_t}\hat{\rho_2}(t)) \\ & \quad - \gamma \answ{D_t}m_h(A_5(t),\hat{\rho_2}(t))+ \gamma \  m_h(A_5(t),\answ{D_t}\hat{\rho_2}(t)).
 \label{l2err_4:SG}
\end{split}
\end{equation}
We integrate \eqref{l2err_4:SG} with respect to $t$ from $0$ to $\xi$ and since $a_h(\hat{\rho_2}(0),\hat{\rho_2}(0)) \geq 0$ and \mbox{$m_h(\rho_2(t),\rho_2(t)) \geq 0$}, we deduce that:
\begin{equation}
\begin{split}
 \| \rho_2(\xi)\|^{2}_{h} &\leq \|\rho_2(0)\|^{2}_{h}-2 \gamma m_h(\rho_2(0),\hat{\rho_2}(0) )-2 m_{h}(D_{t}(u-u_{h})(0),\hat{\rho_2}(0)) -2\int_{0}^{\xi} m_{h}(D_{t}^2\rho_1,\hat{\rho_2}(t)) \\
 & \quad +2 \int_{0}^{\xi} m_{h} (D_{t}\rho_1,\rho_2(t)) +2 \int_{0}^{\xi}(A_1(t),\rho_2(t))+2\int^{\xi}_{0}(A_2(t),\rho_2(t)) \\
 & \quad -2\int^{\xi}_{0}m_h(A_3(t),\rho_2(t))+2\  \gamma \int^{\xi}_{0}(A_4(t),\rho_2(t))-2\  \gamma \int^{\xi}_{0}m_h(A_5(t),\rho_2(t)),
\label{l2err_5:SG}
\end{split}
\end{equation}
where $\|\cdot \|_h:=m_h(\cdot,\cdot)$ and the integrals in time of terms $A_1,A_2,A_3,A_4$ and $A_5$ are transformed through an integration by parts in time. An application of the continuity property of the discrete bilinear form $m_h(\cdot,\cdot)$, Young's inequality, the Cauchy-Schwarz inequality and the approximation property of $P^h$ (Lemma~\ref{ell_pro:SG}), yield the following estimates:
\begin{equation}
|m_{h}(D_{t}(u-u_{h})(0),\hat{\rho_2}(0)) | \leq C~ \|D_{t}(u-u_{h})(0)\|^{2}+~C~\answ{T^2}~\int_{0}^{\xi} \|\rho_2(t)\|^{2} dt ,
\label{l2err_6:SG}
\end{equation}
\begin{equation}
\begin{split}
 \Big| \int^{\xi}_0 m_{h}(D^2_t\rho_1(t),\hat{\rho_2}(t)) \Big | &\leq C~ \int_{0}^{\xi} \|D_{t}^2\rho_1(t)\|~ \|\hat{\rho_2}(t)\|\\
 & \leq~C~h^{4}~ \|D_{t}^2u \|^2_{L^{2}(0,T;H^{2}(\Omega))} +C~T^{2} \int_{0}^{\xi} \| \rho_2(t)\|^{2} dt,
 \label{l2err_7:SG}
 \end{split}
\end{equation}
and
\begin{equation}
\begin{split}
  \Big| \int^{\xi}_{0}m_h(D_{t}\rho_1,\rho_2(t)) \Big|  &\leq C~\int_{0}^{\xi} \| D_{t} \rho_1 \| ~\|\rho_2(t) \| \\
 &\leq C~ h^{4} \|D_{t}u\|^{2}_{L^{2}(0,T;H^{2}(\Omega))} + C~\int_{0}^{\xi} \| \rho_2(t) \|^{2} dt.
 \label{l2err_8:SG}
\end{split}
\end{equation}
Further, we split the load term as follows:
\begin{equation}
\begin{split}
    f_h(u_h)-f(u) &:=\mathcal{P}^0I_h(f(u_h))-f(u) \\
                 &= \mathcal{P}^0I_h(f(u_h))-\mathcal{P}^0I_h(f(u))+\mathcal{P}^0I_h(f(u))- \mathcal{P}^0f(u) \\
                 & \quad +\mathcal{P}^0f(u)-f(u).
                 \label{l2err_9:SG}
\end{split}
\end{equation}
Since $f(u)$ is a smooth enough, we can write:
\begin{equation}
    f(u_h(V_i))-f(u(V_i))=f'(\omega_i)(u_h(V_i)-u(V_i)) ,
\end{equation}
where, $|f'(\omega_j)| \leq 1$. Therefore, we infer that
\begin{equation}
\begin{split}
    I_h(f(u_h)-f(u))&=\sum_{j=1}^{N^{\text{dof}}} f(u_h(V_j))-f(u(V_j))~ \eta_j \\
    & =\sum_{j=1}^{N^{\text{dof}}}~f'(\omega_i)(u_h(V_i)-u(V_i))~\eta_j \\
    & \leq u_h-I_h(u).
    \label{l2err_10:SG}
\end{split}
\end{equation}
Upon employing the approximation property of $I_h$ and the boundedness of the $L^2$ orthogonal projection operator $\mathcal{P}^0$, we have
\begin{equation}
\begin{split}
    \|\mathcal{P}^0( I_h(f(u_h)-f(u))) \| &\leq~\|u_h-I_h(u) \| \\
                          & \leq~\|u_h-u\|+\|u-I_h(u) \| \\
                          & \leq~\|u_h-u\|+C~h^2~|u|_2.
                         \label{l2err_11:SG} 
\end{split}
\end{equation}
Further, since we assume $|f(u)|_2 < \infty$, and using the boundedness of the $L^2$ orthogonal projection operator and the approximation property of $I_h$, we obtain 
\begin{equation}
    \|\mathcal{P}^0I_h(f(u))- \mathcal{P}^0f(u)\| \leq~C~h^{2}~|f(u)|_2.
     \label{l2err_11_t1:SG} 
\end{equation}
and
\begin{equation}
    \|\mathcal{P}^0f(u)- f(u)\| \leq~C~h^{2}~|f(u)|_2.
    \label{l2err_11_t2:SG}
\end{equation}
An application of Lemma~\ref{interpolation:SG} and estimates~\eqref{l2err_11:SG}-\eqref{l2err_11_t2:SG}, yield the bound
\begin{equation}
    \|f_h(u_h)-f(u)\| \leq \|u-u_h\|+C~h^2~|u|_{2}+C~h^2~|f(u)|_2.
    \label{l2err_12:SG}
\end{equation}
Using analogous technique as in References~\cite{adak2018virtual,baker1980multistep}, we obtain:
\begin{equation}
\begin{split}
 \int^{\xi}_{0}(A_{1}(t),\rho_2(t)) &\leq C~h^{4}~\|u\|^2_{L^{2}(0,T,H^{2}(\Omega))}+C~h^{4}~\|f(u)\|^2_{L^{2}(0,T,H^{2}(\Omega))}+C~\int^{\xi}_{0}~\|\rho_2(t)\|^2 ~dt.
\label{l2err_13:SG}
\end{split}
\end{equation}
Using the polynomial consistency property of $m_h(\cdot,\cdot)$ (c.f. \ref{poly_constncy}) and the result of Lemma~\ref{poly_operator:SG}, we have,
\begin{equation}
\begin{split}
 \int^{\xi}_{0}\bigg((A_2(t),\rho_2(t))-m_h(A_3(t),\rho_2(t))\bigg)& \leq C~\Big(h^{4}~\|D^2_{t}u\|^2_{L^{2}(0,T,H^{2}(\Omega))} +\int^{\xi}_{0} \|\rho_2(t)\|^2 \Big),
\label{l2err_14:SG}
\end{split}
\end{equation}
and
\begin{equation}
\begin{split}
 \int^{\xi}_{0}\bigg((A_4(t),\rho_2(t))-m_h(A_5(t),\rho_2(t))\bigg)& \leq C~\Big(h^{4}~\|D_{t}u\|^2_{L^{2}(0,T,H^{2}(\Omega))} +\int^{\xi}_{0} \|\rho_2(t)\|^2 \Big).
\label{l2err_15:SG}
\end{split}
\end{equation}
substituting inequalities \eqref{l2err_6:SG}-\eqref{l2err_8:SG} and \eqref{l2err_13:SG}-\eqref{l2err_15:SG} into \eqref{l2err_5:SG} and using Grownwall's inequality and the stability of $m_h(\cdot,\cdot)$, we obtain:
\begin{equation}
\begin{split}
\|\rho_2(t)\|^2 &\leq \|\rho_2(0)\|^2+C~ \|D_{t}(u-u_{h})(0)\|^2+C~h^{4} \Big(\|D^2_{t}u\|^2_{L^2(0,T,H^{2}(\Omega))}\\
& \quad + \|D_{t}u\|^2_{L^2(0,T,H^{2}(\Omega))}+\|u\|^2_{L^2(0,T,H^{2}(\Omega))}+\|f(u)\|^2_{L^2(0,T,H^{2}(\Omega))}\Big).
\label{l2err_16:SG}
\end{split}
\end{equation}
Upon applying the approximation property of $P^h$, we write
\begin{equation}
\| \rho_2(0)\|^{2} \leq C~\Big( \|u_{h}(0)-u_{0}\|^{2}+h^{4} |u_{0}|_{2}^{2} \Big),
\label{l2err_17:SG}
\end{equation}
and
\begin{equation}
\|\rho_1(t)\|=\| (u-P^hu)(t) \| \leq C~ h^{2} \Big( |u_{0}|_{2}+|D_{t}u|_{L^{1}(0,T;H^{2}(\Omega))} \Big).
\label{l2err_18:SG}
\end{equation}
Using \eqref{l2err_16:SG}-\eqref{l2err_18:SG}, we prove the assertion of the Theorem: 
\begin{equation*}
\begin{split}
&\|(u-u_h)(t)\|  \leq \|\rho_1(t)\|+\|\rho_2(t)\| \\
& \leq C~ \Big(\|u_h(0)-u(0)\|+\|D_{t}(u-u_{h})(0)\| \Big) +C~h^{2}~\Big(|u_{0}|_{2}+\|u\|_{L^{2}(0,T;H^{2}(\Omega))}\\
&\quad +\|D_{t}u\|_{L^{2}(0,T;H^{2}(\Omega))} +\|D_{t}^2u\|_{L^{2}(0,T;H^{2}(\Omega))}+\|f(u)\|_{L^{2}(0,T;H^{2}(\Omega))}\Big).
\end{split}
\end{equation*}

\end{proof}
\begin{remark}
\answ{To derive the first term of the Equation~\eqref{l2err_2:SG} from the Equation \eqref{l2err_1:SG}, we have rewritten  Equation~\eqref{l2err_1:SG} into Equation~\eqref{l2err_2:SG} and adjusted with $A_i(t)$. Moreover, we would like to state that in order to derive $\|\rho(t)\|$, we have employed $D_t \hat{\rho}_2(t)$ as the test function. In addition, interested readers are referred to \cite{baker1980multistep} for detail discussion.}
\end{remark}

\subsection{Optimal $H^1$ error estimates}
\begin{theorem}
\label{th2}
Let $u \in L^2(0,T;H^2(\Omega))$ be the solution of \ (\ref{cont_weak:SG}) and $u_{h}$ be the discrete solution of (\ref{dis_modl:SG}). Moreover, assume that all the assumptions of Theorem~(\ref{th1}) holds. Then, there exists a positive constant $C(u,f(u))$ independent of the mesh size $h$, such that the following estimate holds:
\begin{equation}
\begin{split}
    |u(t)-u_{h}(t)|_{1} &\leq C~\Big(|u_{0}-u_{h,0}|_{1}+\|D_{t}(u-u_h)(0)\|\Big) +C~h^{1}\Big(|u_{0}|_{2} \\ & \quad + \|D_{t}u\|_{L^2(0,T,H^{2}(\Omega))} \Big)
+C~h^{2}~\Big(|D_{t}u(0)|_{2}+\|u\|_{L^2(0,T,H^{2}(\Omega))}   \\ & \quad +\|D^2_{t}u\|_{L^2(0,T,H^{2}(\Omega))}
+\|f(u)\|_{L^2(0,T,H^{2}(\Omega))} \Big) \quad \forall \ t \in [0,T].
\end{split}
\end{equation}
\end{theorem}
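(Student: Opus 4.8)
The plan is to rerun the energy argument from the proof of Theorem~\ref{th1}, but now testing the error equation with $D_t\rho_2$ rather than with the time-averaged function $\hat\rho_2$, so that $a_h(\cdot,\cdot)$ is differentiated in time and the quantity $|\rho_2|_1$ is produced directly by coercivity. As there, write $u-u_h=\rho_1-\rho_2$ with $\rho_1=u-P^hu$ and $\rho_2=u_h-P^hu$. For $\rho_1$, Lemma~\ref{ell_pro:SG} gives $|\rho_1(t)|_1\le Ch\,|u(t)|_2$, and since $u(t)=u_0+\int_0^tD_tu(s)\,ds$ we have $|u(t)|_2\le|u_0|_2+\int_0^t|D_tu(s)|_2\,ds\le|u_0|_2+C\|D_tu\|_{L^2(0,T;H^2(\Omega))}$ by Cauchy--Schwarz in time; hence $|\rho_1(t)|_1\le Ch\big(|u_0|_2+\|D_tu\|_{L^2(0,T;H^2(\Omega))}\big)$. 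This term is the only source of the $O(h)$ loss in the $H^1$ seminorm; all the other contributions will be $O(h^2)$. For $\rho_2$, the error identity \eqref{l2err_1:SG} holds for every $v_h\in\mathcal Q_h$; choosing $v_h=D_t\rho_2(t)$ and using the symmetry of $m_h(\cdot,\cdot)$ and $a_h(\cdot,\cdot)$, its left-hand side becomes $\tfrac12\tfrac{d}{dt}\big(\|D_t\rho_2\|_h^2+a_h(\rho_2,\rho_2)\big)+\gamma\,m_h(D_t\rho_2,D_t\rho_2)$, where $\|\cdot\|_h^2:=m_h(\cdot,\cdot)$. Integrating from $0$ to $\xi\in(0,T]$ and discarding the nonnegative damping integral leaves us to bound $\int_0^\xi$ of the three right-hand pairings of \eqref{l2err_1:SG} tested against $D_t\rho_2$.

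The two consistency pairings $(D_t^2u,D_t\rho_2)-m_h(P^hD_t^2u,D_t\rho_2)$ and $\gamma\big[(D_tu,D_t\rho_2)-m_h(P^hD_tu,D_t\rho_2)\big]$ are handled elementwise by inserting the polynomial approximant $w_\pi$ of $w\in\{D_t^2u,D_tu\}$: the polynomial contribution cancels by the consistency property \eqref{poly_constncy}, and Lemmas~\ref{ell_pro:SG} and \ref{poly_operator:SG} together with the stability \eqref{stab1} give the bound $Ch^2|w|_2\,\|D_t\rho_2\|$, so Young's inequality yields $Ch^4\|w\|^2_{L^2(0,T;H^2(\Omega))}+\int_0^\xi\|D_t\rho_2\|^2\,dt$. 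For the nonlinear pairing we invoke \eqref{l2err_12:SG}, namely $\|f_h(u_h)-f(u)\|\le\|u-u_h\|+Ch^2(|u|_2+|f(u)|_2)$, then split $\|u-u_h\|\le\|\rho_1\|+\|\rho_2\|$ and use $\|\rho_2(s)\|\le\|\rho_2(0)\|+\int_0^s\|D_t\rho_2\|\,d\tau$; Cauchy--Schwarz and Young in time then reduce this pairing's integral to $C\|\rho_2(0)\|^2+Ch^4\big(\|u\|^2_{L^2(0,T;H^2(\Omega))}+\|f(u)\|^2_{L^2(0,T;H^2(\Omega))}\big)+CT^2\int_0^\xi\|D_t\rho_2\|^2\,dt$. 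Collecting these, replacing $a_h(\rho_2(\xi),\rho_2(\xi))$ by $\alpha_{\ast}|\rho_2(\xi)|_1^2$ via \eqref{stab1}, and applying Gronwall's inequality to $t\mapsto\|D_t\rho_2\|_h^2+a_h(\rho_2,\rho_2)$ gives
\[
|\rho_2(\xi)|_1+\|D_t\rho_2(\xi)\|\le C\big(|\rho_2(0)|_1+\|D_t\rho_2(0)\|\big)+Ch^2\big(\|u\|_{L^2(0,T;H^2(\Omega))}+\|D_tu\|_{L^2(0,T;H^2(\Omega))}+\|D_t^2u\|_{L^2(0,T;H^2(\Omega))}+\|f(u)\|_{L^2(0,T;H^2(\Omega))}\big).
\]
It remains to split the initial terms: $|\rho_2(0)|_1\le|u_0-u_{h,0}|_1+|u_0-P^hu_0|_1\le|u_0-u_{h,0}|_1+Ch|u_0|_2$ and, since $D_tu_h(0)=I_h(D_tu(0))$, $\|D_t\rho_2(0)\|\le\|D_t(u-u_h)(0)\|+\|D_tu(0)-P^hD_tu(0)\|\le\|D_t(u-u_h)(0)\|+Ch^2|D_tu(0)|_2$, both by Lemma~\ref{ell_pro:SG}. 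Combining with the $\rho_1$ estimate through $|u-u_h|_1\le|\rho_1|_1+|\rho_2|_1$ yields the claimed bound.

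The step I expect to be the main obstacle is the nonlinear pairing: unlike the consistency terms, $\|f_h(u_h)-f(u)\|$ cannot be bounded by $\|D_t\rho_2\|$, so it must be routed through $\|\rho_2(s)\|\le\|\rho_2(0)\|+\int_0^s\|D_t\rho_2\|$ and a double time integration before Gronwall's inequality can absorb it; one must also check carefully that, apart from $|\rho_1|_1$ and $|\rho_2(0)|_1$, no term forces an $O(h)$ loss, which is what pins the global convergence rate at $O(h)$ in the $H^1$ seminorm.
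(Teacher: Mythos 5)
Your proposal is correct and follows essentially the same route as the paper: the same splitting $u-u_h=\rho_1-\rho_2$ via the elliptic projection $P^h$, the same choice of test function $v_h=D_t\rho_2(t)$ in the error identity \eqref{l2err_1:SG}, the same treatment of the consistency pairings via \eqref{poly_constncy} and Lemmas~\ref{ell_pro:SG}--\ref{poly_operator:SG}, the same use of \eqref{l2err_12:SG} for the nonlinear term, and Gronwall to close. Your handling of the nonlinear pairing (routing $\|\rho_2(s)\|$ through $\|\rho_2(0)\|+\int_0^s\|D_t\rho_2\|$ before Gronwall) is in fact slightly more explicit than the paper's, which leaves $\|u-u_h\|^2$ to be absorbed implicitly, but this is a bookkeeping refinement rather than a different argument.
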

\begin{proof}
The convergence analysis of the error $u-u_h$ in the $H^1$ norm begins by considering $v_h=D_t\rho_2(t)$ in \eqref{l2err_1:SG}, which follows as:
\begin{equation}
\begin{split}
 m_{h}(D_{t}^{2}\rho_2(t), D_t \rho_2(t))+&\gamma~m_h(D_t \rho_2(t), D_t \rho_2(t)) +a_{h}(\rho_2(t),D_t \rho_2(t)) =  (f_{h}(u_{h})-f(u),D_t \rho_2(t)) \\
 & \quad +(D^2_{t}u(t),D_t \rho_2(t))-m_{h}(P^hD_{t}^{2}u(t),D_t \rho_2(t)) \\
 & \quad + \gamma~(D_{t}u(t),D_t \rho_2(t))-\gamma~m_h(P^hD_{t}u(t),D_t \rho_2(t)).
\label{h1err_1:SG}
\end{split}
\end{equation}
Since the time derivative commutes with $m_h(\cdot,\cdot)$, and $a_h(\cdot,\cdot)$, and $m_h(D_t \rho_2(t), D_t \rho_2(t))>0$, we deduce that:
\begin{equation}
\label{h1err:2}
\begin{split}
    & \frac{1}{2}~\answ{D_t}~m_h(D_t \rho_2(t),D_t \rho_2(t))+\frac{1}{2}~\answ{D_t}~a_h( \rho_2(t), \rho_2(t)) \\
    & \leq~|(f_{h}(u_{h})-f(u),D_t \rho_2(t))|+|(D^2_{t}u(t),D_t \rho_2(t))-m_{h}(P^hD_{t}^{2}u(t),D_t \rho_2(t))|\\
    &\quad + \gamma|(D_{t}u(t),D_t \rho_2(t))-m_h(P^hD_{t}u(t),D_t \rho_2(t))|.
    \end{split}
\end{equation}
Using analogous arguments as Theorem~4.3 in \cite{adak2018virtual}, we can bound the right hand side terms. 
An application of similar arguments as \eqref{l2err_12:SG}, we bound the nonlinear load term as
\begin{equation}
\label{tic:1}
    \begin{split}
        |(f_{h}(u_{h})-f(u),D_t \rho_2(t))| \leq~C \Big( \|u-u_h\|^2+h^4~|u|^2_2+h^4~|f(u)|_2^2 \Big)+ \|D_t \rho_2(t))\|^2.
    \end{split}
\end{equation}
Further, using the polynomial consistency property of $m_h(\cdot,\cdot)$ and Lemma~\ref{ell_pro:SG}, we can write the term as
\begin{equation}
\label{tic:2}
\begin{split}
|(D^2_{t}u(t),D_t \rho_2(t))-&m_{h}(P^hD_{t}^{2}u(t),D_t \rho_2(t))| \leq |(D^2_{t}u(t),D_t \rho_2(t))-(\mathcal{P}^0 D^2_{t}u(t),D_t \rho_2(t))| \\
& \quad +|(\mathcal{P}^0 D^2_{t}u(t),D_t \rho_2(t))-m_{h}(P^hD_{t}^{2}u(t),D_t \rho_2(t))|\\
& \leq \quad C~h^4~|D^2_{t}u(t)|^2_2+ \|D_t \rho_2(t)\|^2.
\end{split}
\end{equation}
Similarly, we deduce that
\begin{equation}
\label{tic:3}
    |(D_{t}u(t),D_t \rho_2(t))-m_h(P^hD_{t}u(t),D_t \rho_2(t))| \leq~C~h^4~|D_tu|_2^2+\|D_t \rho_2(t)\|^2.
\end{equation}
Upon substituting the estimations \eqref{tic:1} - \eqref{tic:3} into the Equation~\eqref{h1err:2} and an application of the stability properties of $m_h(\cdot,\cdot)$ and $a_h(\cdot,\cdot)$ (Equation \eqref{stab1}), the Cauchy Schwarz inequality, and integrating \eqref{h1err:2} from $0$ to $t$, we have
\begin{equation*}
    \begin{split}
        \|D_t\rho_2(t)\|^2+|\rho_2(t)|_1^2 &\leq ~C~(\|D_t\rho_2(0)\|^2+|\rho_2(0)|_1^2)+C~h^4 \Big(|u_{0}|_{2}^2+\|u\|_{L^{2}(0,T;H^{2}(\Omega))}^2\\
&\quad +\|D_{t}u\|_{L^{2}(0,T;H^{2}(\Omega))}^2 +\|D_{t}^2u\|_{L^{2}(0,T;H^{2}(\Omega))}^2+\|f(u)\|_{L^{2}(0,T;H^{2}(\Omega))}^2\Big) \\
& \quad + \int_0^t \|D_t\rho_2(s)\|^2 .
    \end{split}
\end{equation*}
Utilizing Gronwall's inequality and the estimation  $|\rho_1(t)|_1 \leq C~h \Big(|u_0|_2+|D_tu|_{L^1(0,T;H^2(\Omega))} \Big)$, we obtain the desired result.
\end{proof}
\begin{remark}
In the proof of Theorem~\ref{th2}, we skipped the proofs of the boundedness of the terms $|(f_{h}(u_{h})-f(u),D_t \rho_2(t))|$, and $|(D^2_{t}u(t),D_t \rho_2(t))-m_{h}(P^hD_{t}^{2}u(t),D_t \rho_2(t))|$, and $|(D_{t}u(t),D_t \rho_2(t))-m_h(P^hD_{t}u(t),D_t \rho_2(t))|$. These terms can be bounded easily using the polynomial consistency property of $m_h(\cdot,\cdot)$, \eqref{poly_constncy} and the polynomial approximation property (Lemma~\ref{poly_operator:SG}). We refer to Theorem~4.3 in \cite{adak2018virtual} for detailed evidence on the boundedness of the above mentioned terms.
\end{remark}

\subsection{Error estimation for the fully discrete scheme}
Now, we move to derive the error estimations for the fully discrete schemes. To present the analysis unambiguously, we introduce the following notations:
\begin{equation*}
    \partial_t^2 \rho_2^n:=\frac{\rho_2^{n+2}-2 \ \rho_2^{n+1}+\rho_2^n }{\Delta t^2}; \quad \delta_t \rho_2^n:=\frac{\rho_2^{n+2}-\rho_2^{n}}{2\ \Delta t}; \quad \rho_{2,1/2}^n:=\frac{\rho_2^{n+2}+\rho_2^n}{2};\quad \partial_t \rho_2:=\frac{\rho_2^1-\rho_2^0}{\Delta t},
\end{equation*}
where $\rho_2^n:=\rho_2(\cdot,t_n)$.

\begin{theorem}
Let $u$ be the analytical solution of \eqref{cont_weak:SG} and $\{ u_h^n\}_n$ be a sequence of discrete solutions satisfying \eqref{full_dis_1} \answ{for $\theta=1/2$}. Further, we assume that \answ{Assumption}~\ref{assum_1} holds and $\|\partial_t \rho_{2}\|+\|\rho_2^1\|_1+\|\rho_2^0\|_1=O(h^2+\Delta t^2)$, $D^4_{t}u \in L^2(0,T;L^2(\Omega))$, $D^3_{t} u \in L^{\infty}(0,T;L^2(\Omega))$. Additionally, let $f(u)\in L^{\infty}(0,T;H^{2}(\Omega))$, $D_tu \in L^{2}(0,T;H^{2}(\Omega))$, $D_t^2u \in L^{2}(0,T;H^{2}(\Omega))$. Then, there exists a positive constant $C$ such that the following estimation holds:
\begin{equation*}
\begin{split}
  \|u(t_n)-u_h^n\| &\leq ~C\Big(\Big \|\frac{\rho_2^1-\rho_2^0}{\Delta t} \Big \|+\|\rho_2^1\|_1+\|\rho_2^0\|_1  \Big) \\
  & \quad +C~ (\Delta t)^2~ \Big(\|D^4_tu\|_{L^2(0,T;L^2(\Omega))}+\|D^3_t u\|_{L^{\infty}(0,T;L^2(\Omega))} \Big) \\
    & \quad +C~h^2~ \Big(|u(0)|_2+|D_t u|_{L^{2}(0,T; H^2(\Omega))}+|D_t^2 u|_{L^{2}(0,T; H^2(\Omega))}+\|f(u)\|_{L^{\infty}(0,T; H^2(\Omega))} \Big). 
  \end{split}
\end{equation*}
\end{theorem}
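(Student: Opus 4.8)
The plan is to follow the same Baker-type energy argument used for the semidiscrete $L^2$ estimate (Theorem~\ref{th1}), but now discretized in time with the Crank--Nicolson ($\theta=1/2$) scheme, and to combine the spatial splitting $u-u_h^n=\rho_1^n-\rho_2^n$ (with $\rho_1^n=(u-P^hu)(t_n)$, $\rho_2^n=u_h^n-P^hu(t_n)$) with a temporal consistency (truncation-error) analysis. First I would write the error equation for $\rho_2^n$: subtracting the fully discrete scheme \eqref{full_dis_1} (with $\theta=1/2$) from \eqref{cont_weak:SG} evaluated at $t_{n+1}$ and using the definition of $P^h$, one obtains
\begin{equation*}
m_h(\partial_t^2\rho_2^n,v_h)+\gamma\, m_h(\delta_t\rho_2^n,v_h)+a_h(\rho_{2,1/2}^n,v_h)
= (f_h^{n+1/2}-f(u(t_{n+1})),v_h)+\big(\tau^n,v_h\big)+(\text{mass-consistency terms}),
\end{equation*}
where $\tau^n$ collects the time-discretization truncation errors from $\partial_t^2$, $\delta_t$, and the Crank--Nicolson averaging of $a_h$ and $F_h$, and the mass-consistency terms involve $P^hD_t^2u-D_t^2u$ and $P^hD_tu-D_tu$ together with their polynomial-consistency counterparts as in \eqref{l2err_14:SG}--\eqref{l2err_15:SG}. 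The standard Taylor-expansion bounds give $\|\tau^n\|\lesssim (\Delta t)^2(\|D_t^4u\|_{L^2(0,T;L^2)}+\|D_t^3u\|_{L^\infty(0,T;L^2)})$ after summation in $n$.

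Next I would choose the discrete analogue of the test function used in Theorem~\ref{th1}, namely the backward-summed quantity $\hat\rho_2^{\,n}:=\Delta t\sum_{k=n}^{M}\rho_{2,1/2}^k$ (or, equivalently, test with $\delta_t\hat\rho_2^{\,n}$ which telescopes to $-\rho_{2,1/2}^n$), so that the term $a_h(\rho_{2,1/2}^n,\hat\rho_2^{\,n})$ telescopes into a discrete energy and the troublesome first term $m_h(\partial_t^2\rho_2^n,\cdot)$ becomes a difference of $m_h(\delta_t\rho_2^n,\rho_{2,1/2}^n)$ plus a nonnegative square. Summing over $n$ from $0$ to $M-1$ and using $a_h(\hat\rho_2^{\,0},\hat\rho_2^{\,0})\ge0$, $m_h(\cdot,\cdot)\ge0$, I would arrive at a discrete inequality of the form
\begin{equation*}
\|\rho_2^{M}\|_h^2 \le \|\rho_2^0\|_h^2 + C\Big\|\tfrac{\rho_2^1-\rho_2^0}{\Delta t}\Big\|^2 + (\text{load and consistency contributions}) + C\,\Delta t\sum_{n}\|\rho_2^n\|^2,
\end{equation*}
the last term being precisely what a discrete Gr\"onwall lemma absorbs. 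For the nonlinear load contribution I would use $f(s)=-\sin s$ smooth with $|f'|\le1$, write $f_h^{n+1/2}-f(u(t_{n+1}))$ via \eqref{tic1} as in \eqref{l2err_9:SG}, and apply the interpolation and $\mathcal{P}^0$-boundedness bounds \eqref{l2err_11:SG}--\eqref{l2err_12:SG} to get $\|f_h^{n+1/2}-f(u(t_{n+1}))\|\lesssim \|(u-u_h)(t_{n+1/2})\|+h^2(|u|_2+|f(u)|_2)+(\Delta t)^2\|D_t^2 u\|$, the extra $(\Delta t)^2$ coming from averaging $f$ at the midpoint; this feeds back into the Gr\"onwall term through $\|\rho_2\|$ plus the harmless $\|\rho_1\|$. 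After the discrete Gr\"onwall step and inserting the approximation estimates for $\rho_1$ from Lemma~\ref{ell_pro:SG} (namely $\|\rho_1^n\|\lesssim h^2(|u_0|_2+|D_tu|_{L^2(0,T;H^2)})$ and the corresponding bound for the starting error using $\partial_t u(0)\in H^2$), and using the triangle inequality $\|u(t_n)-u_h^n\|\le\|\rho_1^n\|+\|\rho_2^n\|$, I would recover the claimed estimate with its three groups of terms: the data/initialization term, the $(\Delta t)^2$ temporal term, and the $h^2$ spatial term.

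The main obstacle I expect is handling the first term $m_h(\partial_t^2\rho_2^n,\hat\rho_2^{\,n})$ together with the Crank--Nicolson averaging so that everything telescopes cleanly while keeping second-order accuracy: one must be careful that the summation-by-parts in the discrete time variable produces exactly the boundary term $\big\|(\rho_2^1-\rho_2^0)/\Delta t\big\|$ (hence the hypothesis $\|\partial_t\rho_2\|+\|\rho_2^1\|_1+\|\rho_2^0\|_1=O(h^2+\Delta t^2)$ enters) and does not generate uncontrolled $\rho_2^0$, $\rho_2^1$ contributions, and that the truncation error of the averaged stiffness term $a_h(\rho_{2,1/2}^n,\cdot)$ versus $a_h(\rho_2(t_{n+1}),\cdot)$ is genuinely $O((\Delta t)^2)$ rather than $O(\Delta t)$. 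A secondary technical point is that the discrete Gr\"onwall lemma requires $\Delta t$ small enough relative to the Gr\"onwall constant (an implicit mild restriction, harmless since Crank--Nicolson is unconditionally stable for the linear part), and that the regularity hypotheses $D_t^4u\in L^2(0,T;L^2)$, $D_t^3u\in L^\infty(0,T;L^2)$ are exactly what make the truncation terms summable. Once these bookkeeping issues are resolved, the remaining steps are routine adaptations of \eqref{l2err_5:SG}--\eqref{l2err_18:SG} to the time-discrete setting.
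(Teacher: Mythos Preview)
Your overall architecture---the splitting $u(t_n)-u_h^n=\rho_1^n-\rho_2^n$ via the elliptic projector, the error equation for $\rho_2^n$, the Taylor/Dupont-type truncation bounds for $\partial_t^2$ and $\delta_t$, the treatment of the load via \eqref{l2err_9:SG}--\eqref{l2err_12:SG}, and a discrete Gr\"onwall finish---matches the paper exactly. The one genuine difference is the test function. You propose the discrete Baker device $\hat\rho_2^{\,n}=\Delta t\sum_{k=n}^{M}\rho_{2,1/2}^k$, mirroring the semidiscrete proof of Theorem~\ref{th1}. The paper instead takes the more classical route for Crank--Nicolson wave schemes and tests \eqref{full_discrete_pf:1} directly with $v_h=\delta_t\rho_2^{\,n}=(\rho_2^{n+2}-\rho_2^n)/(2\Delta t)$. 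This yields at once the telescoping energy identity
\[
\Big\|\tfrac{\rho_2^{n+2}-\rho_2^{n+1}}{\Delta t}\Big\|^2-\Big\|\tfrac{\rho_2^{n+1}-\rho_2^{n}}{\Delta t}\Big\|^2+\tfrac12\big(|\rho_2^{n+2}|_1^2-|\rho_2^{n}|_1^2\big)+\gamma\,\|\delta_t\rho_2^n\|^2\le\ldots,
\]
so that summing over $n$ produces exactly the boundary terms $\|\partial_t\rho_2\|$, $|\rho_2^1|_1$, $|\rho_2^0|_1$ appearing in the statement, without any summation-by-parts gymnastics. Your Baker-type choice is workable, but the ``main obstacle'' you anticipate (making $m_h(\partial_t^2\rho_2^n,\hat\rho_2^{\,n})$ telescope cleanly across three time levels) is precisely what the paper sidesteps: with $v_h=\delta_t\rho_2^{\,n}$ the second difference pairs with the first difference automatically, and no backward sum is needed. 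In short, both routes arrive at the same place; the paper's is the shorter one here, while yours keeps closer structural parallelism with the semidiscrete argument.
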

\begin{proof}
Using the projection operator $P^hu$ at time $t=t_n$, we split the error $u(t_n)-u_h^n:=\rho_1^n-\rho_2^n$. Lemma~\ref{ell_pro:SG} yields the estimation of $\|\rho_1^n\|$. In order to prove the estimation for $\|\rho_2^n\|$, we proceed as follows: Using the equation~\eqref{full_dis_1}, we can write as
\begin{equation}
\label{full_discrete_pf:1}
\begin{split}
    m_h(\partial_t^2 \rho_2^n,v_h)& +\gamma~m_h(\delta_t \rho_2^n,v_h)+a_h(\rho_{2,1/2}^n,v_h)=(f_{h,1/2}^{n},v_h)-(f_{1/2}^n,v_h) \\
    & \quad- m_h(\partial_t^2 P^h u^n,v_h)+( D_t^2 u^n_{1/2},v_h)-\gamma~m_h(\delta_t P^h u^n,v_h)+\gamma~(D_t u^n_{1/2},v_h).
\end{split}
\end{equation}
Adding and subtracting the term $(\partial_t^2  u^n,v_h)$, we have
\begin{equation*}
    \begin{split}
        m_h(\partial_t^2 P^h u^n,v_h)&-(D_t^2 u^n_{1/2},v_h)=m_h(\partial_t^2 P^h u^n,v_h)-(\partial_t^2  u^n,v_h) \\
        & \quad + (\partial_t^2  u^n,v_h)-(D_t^2 u^n_{1/2},v_h).
    \end{split}
\end{equation*} 
Using analogous arguments as in \cite{dupont19732} (Lemma~6), we have 
\begin{equation*}
    |(\partial_t^2 u^n,v_h)-(D_t^2u^n_{1/2},v_h)| \leq C~\Delta t^3~\|D_t^4u\|_{L^2(t_{n-1},t_{n+1};L^2(\Omega))}~\|v_h\|.
\end{equation*}
Further, an application of the polynomial consistency property of $m_h(\cdot,\cdot)$ yields
\begin{equation*}
\begin{split}
    |m_h(\partial_t^2 P^h u^n, v_h)-(\partial_t^2 u^n,v_h)|&=|m_h(\partial_t^2 P^h u^n, v_h)-(\mathcal{P}^0 (\partial_t^2 u^n),v_h)|\\
    & \quad +|(\mathcal{P}^0 (\partial_t^2 u^n),v_h)-(\partial_t^2 u^n,v_h) |\\
    & \leq C~h^{2}~|\partial_t^2 u^n|_{2}~\answ{\|v_h\|}.
    \end{split}
\end{equation*}

Using the polynomial consistency property of $m_h(\cdot,\cdot)$, we have
\begin{equation*}
    \begin{split}
        |-m_h(\delta_t P^h u^n,v_h)+ (\delta_t  u^n, v_h)|& =|-m_h(\delta_t P^h u^n,v_h)+(\mathcal{P}^0(\delta_t u^n),v_h) |\\
        & \quad |-(\mathcal{P}^0(\delta_t u^n),v_h)+(\delta_t  u^n, v_h) |\\
        & \leq C~h^{2}~|\delta_t u^n|_{2}~\|v_h\|.
    \end{split}
\end{equation*}
An application of Taylor's theorem yields
\begin{equation*}
    |-(\delta_tu^n,v_h)+(D_t u^n_{1/2},v_h)| \leq~C ~(\Delta t)^2~\|D^3_t u(t_n)\|~\|v_h\|. 
\end{equation*}
Further, using the analogous arguments as \eqref{l2err_12:SG} at time $t=t_n$, we can bound the right hand side as: 
\begin{equation}
\label{load:discrete}
    \|f_h(u_h(t_n))-f(u(t_n))\| \leq~\|u(t_n)-u_h(t_n)\|+C~h^2~|u(t_n)|_2+C~h^2~|f(u(t_n))|_2.
\end{equation}
Substituting $v_h=\frac{\rho^{n+2}_2-\rho^{n}_2}{2\ \Delta t}$ in \eqref{full_discrete_pf:1}, and exploiting result \eqref{load:discrete} at time $t=t_n$, we obtain,
\begin{equation}
\label{full_dis:review}
\begin{split}
    & \Big( \Big\|\frac{\rho_2^{n+2}-\rho_2^{n+1}}{\Delta t } \Big 
    \|^2-\Big \|\frac{\rho_2^{n+1}-\rho_2^{n}}{\Delta t } \Big \|^2 +\gamma \Big \|\frac{\rho_2^{n+2}-\rho_2^{n}}{2 \ \Delta t } \Big \|^2+ 1/2~(|\rho^{n+2}_{2}|_1^2-|\rho^{n}_{2}|_1^2)\Big) \\
    & \leq C~(\Delta t)^4~\Big(\|D_t^4 u\|_{L^2(t_{n+2},t_{n};L^2(\Omega))}^2  +\|D_t^3u(t_n)\|^2 \Big)+C \Delta t~h^4~\Big(|u(t_n)|_2^2+|u(t_{n+2})|_2^2 \\
    & \quad + |f(u(t_n))|_2^2+|f(u(t_{n+2}))|_2^2+|\partial_t^2u^n|_2^2+|\delta_tu^n|_2^2\Big)+ \Big(\| \rho^{n+2}_2\|^2+\| \rho^{n}_2\|^2 \Big) \\
    & \quad + \Big(\Big \|\frac{\rho_2^{n+2}-\rho_2^{n+1}}{\Delta t } \Big \|^2+\Big \|\frac{\rho_2^{n+1}-\rho_2^{n}}{\Delta t }\Big \|^2 \Big).
\end{split}
\end{equation}
\answ{Following \cite{dupont19732}, we can express the term $\partial_t^2 u^n$ as 
\begin{equation*}
    \partial^2_t u_n=\frac{1}{\Delta t^2} \int_{-\Delta t}^{\Delta t} (\Delta t-|\tau|)~D_t^2u(t_{n+1}+\tau).
\end{equation*}
This implies
\begin{equation*}
    \Delta t~\sum_{n=0}^{n-2} |\partial^2_t u_n|_{2}^2 \leq~ \|D^2_t u\|_{L^2(0,T;H^{2}(\Omega))}.
\end{equation*}
Moreover, we can write the term as 
\begin{equation*}
    \begin{split}
         \Big (-m_h(\delta_t P^h u^n,v_h)+(D_t u^n_{1/2},v_h)\Big)&= \Big (-m_h(\delta_t P^h u^n,v_h)+ (\delta_t  u^n, v_h) \\
         &\quad -( \delta_t u^n, v_h)+(D_t u^n_{1/2},v_h)\Big).
    \end{split}
\end{equation*}}
Upon integration, Equation \eqref{full_dis:review} form $0$ to $n-2$ and using the discrete Gronwall's inequality, we have
\begin{equation}
    \begin{split}
        &\|\rho_2^n\|^2 \leq C \Big( \Big \|\frac{\rho_2^1-\rho_2^0}{\Delta t}\Big \|^2+\|\rho_2^1\|_1^2+\|\rho_2^0\|_1^2 \Big ) +C~(\Delta t)^4~\Big(\|D_t^4 u\|_{L^2(0,T;L^2(\Omega))}^2 \\& \quad +\|D_t^3u\|_{L^{\infty}(0,T;L^2(\Omega))}^2 \Big)  +C~h^4~\Big( |u(0)|_{2}^2+|D_t u|_{L^{2}(0,T; H^2(\Omega))}^2+|D_t^2 u|_{L^{2}(0,T; H^2(\Omega))}^2 \\
        & \quad +\|f(u)\|_{L^{\infty}(0,T; H^2(\Omega))}^2\Big).
    \end{split}
\end{equation}
Using the estimates of $\rho_1^n$ and $\rho_2^n$, we derive the final result:
\begin{equation*}
  \begin{split}
    & \|u(t_n)-u_h^n\|  \leq (\|\rho_1^n\|+\|\rho_2^n\|) \\
    &\leq ~C\Big(\Big \|\frac{\rho_2^1-\rho_2^0}{\Delta t}\Big \|+\|\rho_2^1\|_1+\|\rho_2^0\|_1  \Big)+C~ (\Delta t)^2~ \Big(\|D^4_tu\|_{L^2(0,T;L^2(\Omega))}+\|D^3_t u\|_{L^{\infty}(0,T;L^2(\Omega))} \Big) \\
    & \quad +C~h^2~ \Big(|u(0)|_2+|D_t u|_{L^{2}(0,T; H^2(\Omega))}+|D_t^2 u|_{L^{2}(0,T; H^2(\Omega))}+\|f(u)\|_{L^{\infty}(0,T; H^2(\Omega))} \Big).
   \end{split}
\end{equation*}
\end{proof}



\section{Numerical Experiments}
\label{num_ex_sg}
Four numerical examples are presented in this section to demonstrate the accuracy and the convergence characteristics of the proposed scheme. In all the cases, a rectangular domain is considered and the domain is discretized with different types of meshes, viz., distorted square, non-convex polygons and Voronoi meshes. The random polygonal mesh is generated using Polymesher~\cite{talischi2012polymesher,cangiani2016conforming}, a polygonal meshing tool based on Matlab. In addition, \answ{we} also consider concave elements and distorted quadrilateral elements. \fref{fig:mesh} shows a representative mesh for the distorted square, concave polygons and Voronoi meshes.  A pseudocode of the nonlinear solver is given in the Appendix.

\begin{figure}[htpb]
\centering     
\subfigure[Distorted Square]{\includegraphics[width=0.32\textwidth]{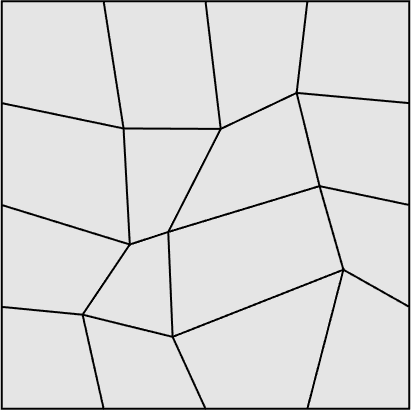}}  
\subfigure[Nonconvex]{\includegraphics[width=0.32\textwidth]{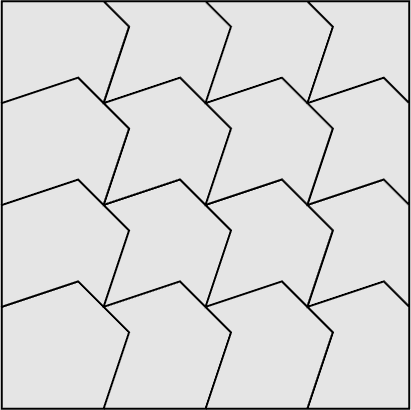}} 
\subfigure[Smoothed Voronoi]{\includegraphics[width=0.32\textwidth]{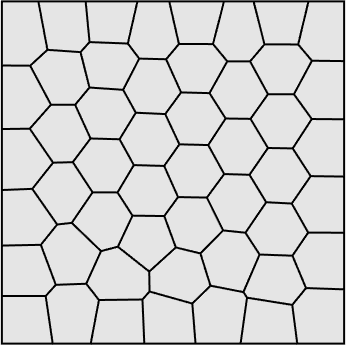}} 
\caption{A schematic representation of different discretizations employed in this study.}
\label{fig:mesh}
\end{figure}

\subsection{Test Problem-1} 
In the first example, we consider the following two-dimensional sine-Gordon equation
\begin{equation}
\left \{ 
\begin{aligned}
    & D^2_t u-\Delta u=f(u) \ \text{in} -7 \leq x,y \leq 7 \ \text{for} \ t \in (0,T] \\
    & u|_{\partial \Omega}=u_0,
\end{aligned}
\right.
\label{test_1}
\end{equation}
with the initial conditions:
\begin{equation*}
\begin{split}
    & u(x,y,0)=4\ \tan^{-1}(\exp (x+y)), \quad -7 \leq x,y \leq 7,   \\
    & D_t u(x,y,0)=-\frac{4\ \exp(x+y)}{1+\exp(2x+2y)}, \quad -7 \leq x,y \leq 7.
\end{split}
\end{equation*}
The analytical solution is given by:
\begin{equation}
    u(x,y,t)=4\ \tan^{-1}(\exp(x+y-t)) \quad t \in (0,T].
    \label{eqn:anasol}
\end{equation}
In \esref{test_1}, $u_0$ denotes the nonhomogeneous Dirichlet boundary condition that can be determined from the exact solution. The numerical solutions are computed over the nontrivial meshes in the rectangular domain \mbox{$-7 \leq x,y \leq 7 $} at time level $T=$ 1. The rate of convergence is evaluated through the relative error in the $L^2$ norm and the $H^1$ seminorm, defined by, which is the difference between the numerical solution $u_h$ and the interpolated solution $I_hu$ at final time $T$, as:
\begin{align}
    \|I_hu-u_h\|_0^2:= &\frac{m_h \Big(I_hu(\cdot,T)-u_h(\cdot,T),I_hu(\cdot,T)-u_h(\cdot,T)\Big)}{m_h \Big( I_hu(\cdot,T),I_hu(\cdot,T)\Big)} \\
    |I_hu-u_h|_1^2:=& \frac{a_h \Big(I_hu(\cdot,T)-u_h(\cdot,T),I_hu(\cdot,T)-u_h(\cdot,T)\Big)}{a_h \Big( I_hu(\cdot,T),I_hu(\cdot,T)\Big)}.
\end{align}

\fref{fig:l2h1_conv_example1} shows the convergence of the relative error in the $L^2$ and the $H^1$ norm with mesh refinement for a variety of meshes including smoothed Voronoi, distorted square and non-convex polygons for two different time steps, $\Delta t=$ 0.01, 0.5. For $\Delta t=$ 0.01, it can be inferred that the proposed method yields optimal convergence rate of 2 and 1 in $L^2$ and $H^1$, respectively as predicted theoretically in Theorems \ref{th1} and \ref{th2} (c.f. Section \ref{converge:sg}), when the mesh is refined. However, for $\Delta t=$ 0.05, the method yields optimal rate of convergence in the $H^1$ norm, but not in the $L^2$ norm. This can be attributed to the choice of $\Delta t$, which according to the time discretization scheme converges optimally only for sufficiently small $\Delta t$.

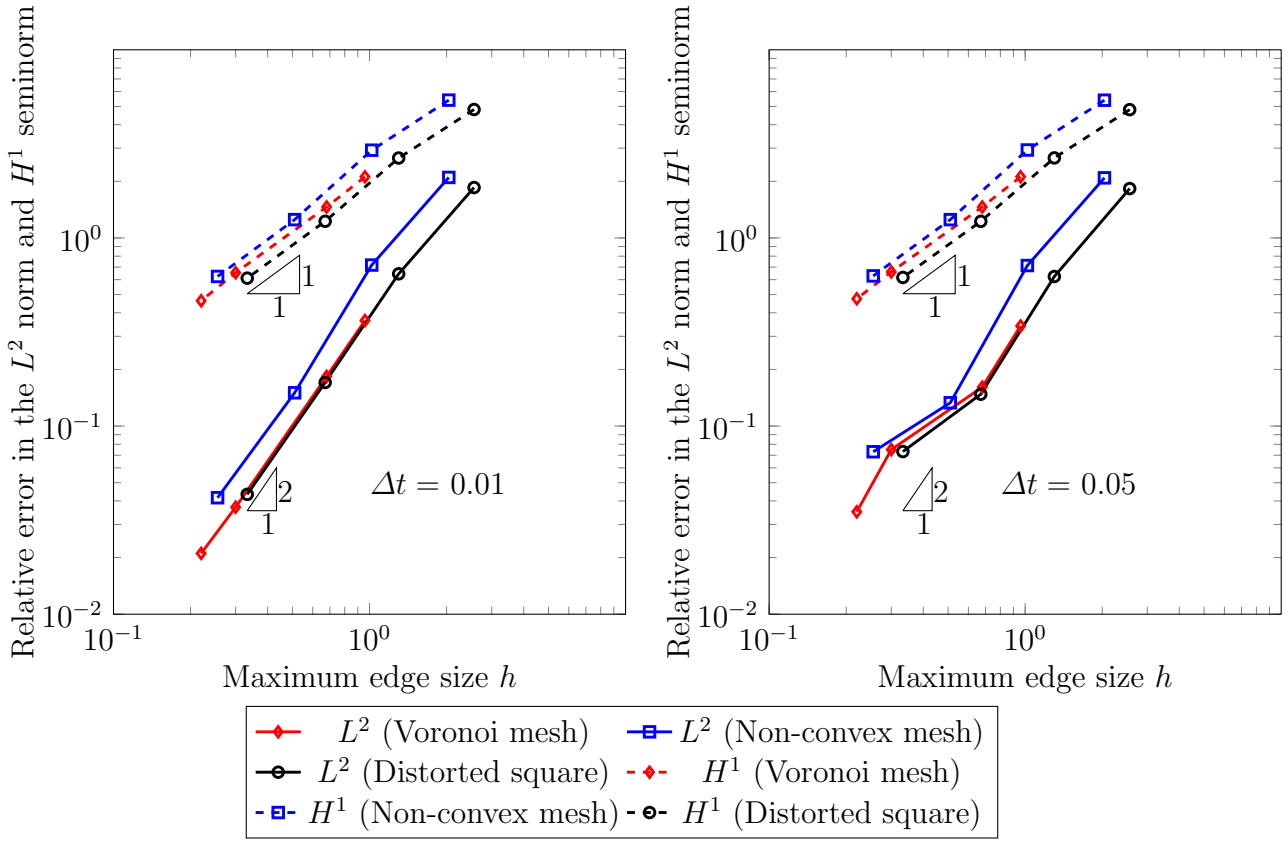
\begin{figure}[htpb!]
\newlength\figureheight 
\newlength\figurewidth 
\setlength\figureheight{10cm} 
\setlength\figurewidth{8cm}
%
%
\begin{center}
\begin{tikzpicture}
\begin{axis}[%
width=0.85092\figurewidth,
height=0.75092\figureheight,
scale only axis,
xmode=log,
xmin=0.1,
xmax=10,
xtick={0.1,   1},
xminorticks=true,
xlabel={Maximum edge size $h$},
ymode=log,
ymin=0.01,
ymax=10,
ytick={0.01,  0.1,    1},
yminorticks=true,
ylabel={Relative error in the $L^2$ norm and $H^1$ seminorm},
]
\addplot [color=red,solid,line width=1.1pt,mark=diamond,mark options={solid}]
  table[row sep=crcr]{%
0.96	0.36389\\
0.68	0.18356\\
0.3	0.037051\\
0.22	0.021048\\
};

\addplot [color=blue,solid,line width=1.1pt,mark=square,mark options={solid}]
  table[row sep=crcr]{%
2.04	2.0974\\
1.02	0.71763\\
0.51	0.15033\\
0.255	0.04161\\
};

\addplot [color=black,solid,line width=1.1pt,mark=o,mark options={solid}]
  table[row sep=crcr]{%
2.56	1.853\\
1.3	0.64468\\
0.671	0.17051\\
0.333	0.043356\\
};

\addplot [color=red,dashed,line width=1.1pt,mark=diamond,mark options={solid}]
  table[row sep=crcr]{%
0.96	2.1133\\
0.68	1.4608\\
0.3	0.65354\\
0.22	0.46349\\
};

\addplot [color=blue,dashed,line width=1.1pt,mark=square,mark options={solid}]
  table[row sep=crcr]{%
2.04	5.4026\\
1.02	2.9264\\
0.51	1.2512\\
0.255	0.62408\\
};

\addplot [color=black,dashed,line width=1.1pt,mark=o,mark options={solid}]
  table[row sep=crcr]{%
2.56	4.8149\\
1.3	2.6596\\
0.671	1.2259\\
0.333	0.61133\\
};

\addplot [color=black,solid,forget plot]
  table[row sep=crcr]{%
0.333	0.0355\\
0.433	0.0355\\
0.433	0.0600227209191173\\
0.333	0.0355\\
};
\node[right, align=left, inner sep=0mm, text=black]
at (axis cs:0.373,0.030175,0) {$1$};
\node[right, align=left, inner sep=0mm, text=black]
at (axis cs:0.435598,0.0447342647567873,0) {$2$};
\addplot [color=black,solid,forget plot]
  table[row sep=crcr]{%
0.333	0.505355\\
0.533	0.505355\\
0.533	0.808871516516517\\
0.333	0.505355\\
};
\node[right, align=left, inner sep=0mm, text=black]
at (axis cs:0.413,0.42955175,0) {$1$};
\node[right, align=left, inner sep=0mm, text=black]
at (axis cs:0.536198,0.629796771771772,0) {$1$};

\node[right, align=left, inner sep=0mm, text=black]
at (axis cs: 1.0,0.05,0) {$\Delta t=$ 0.01};
\end{axis}
\end{tikzpicture}%
~~~
\begin{tikzpicture}
\begin{axis}[%
width=0.85092\figurewidth,
height=0.75092\figureheight,
at={(0\figurewidth,0\figureheight)},
scale only axis,
xmode=log,
xmin=0.1,
xmax=10,
xtick={0.1,   1},
xminorticks=true,
xlabel={Maximum edge size $h$},
ymode=log,
ymin=0.01,
ymax=10,
ytick={0.01,  0.1,    1},
yminorticks=true,
ylabel={Relative error in the $L^2$ norm and $H^1$ seminorm},
legend style = {text=black},
legend to name=named,
legend columns = 2,
]
\addplot [color=red,solid,line width=1.1pt,mark=diamond,mark options={solid}]
  table[row sep=crcr]{%
0.96	0.34068\\
0.68	0.1611\\
0.3	0.074984\\
0.22	0.0350317\\
};
\addlegendentry{$L^2$ (Voronoi mesh)};

\addplot [color=blue,solid,line width=1.1pt,mark=square,mark options={solid}]
  table[row sep=crcr]{%
2.04	2.0863\\
1.02	0.71345\\
0.51	0.13299\\
0.255	0.073017\\
};
\addlegendentry{$L^2$ (Non-convex mesh)};

\addplot [color=black,solid,line width=1.1pt,mark=o,mark options={solid}]
  table[row sep=crcr]{%
2.56	1.8299\\
1.3	0.62373\\
0.671	0.14772\\
0.333	0.073208\\
};
\addlegendentry{$L^2$ (Distorted square)};

\addplot [color=red,dashed,line width=1.1pt,mark=diamond,mark options={solid}]
  table[row sep=crcr]{%
0.96	2.1151\\
0.68	1.4612\\
0.3	0.66002\\
0.22	0.47502\\
};
\addlegendentry{$H^1$ (Voronoi mesh)};

\addplot [color=blue,dashed,line width=1.1pt,mark=square,mark options={solid}]
  table[row sep=crcr]{%
2.04	5.3959\\
1.02	2.9341\\
0.51	1.2511\\
0.255	0.62827\\
};
\addlegendentry{$H^1$ (Non-convex mesh)};

\addplot [color=black,dashed,line width=1.1pt,mark=o,mark options={solid}]
  table[row sep=crcr]{%
2.56	4.8084\\
1.3	2.6629\\
0.671	1.2249\\
0.333	0.6166\\
};
\addlegendentry{$H^1$ (Distorted square)};

\addplot [color=black,solid,forget plot]
  table[row sep=crcr]{%
0.333	0.0355\\
0.433	0.0355\\
0.433	0.0600227209191173\\
0.333	0.0355\\
};
\node[right, align=left, inner sep=0mm, text=black]
at (axis cs:0.373,0.030175,0) {$1$};
\node[right, align=left, inner sep=0mm, text=black]
at (axis cs:0.435598,0.0447342647567873,0) {$2$};
\addplot [color=black,solid,forget plot]
  table[row sep=crcr]{%
0.333	0.505355\\
0.533	0.505355\\
0.533	0.808871516516517\\
0.333	0.505355\\
};
\node[right, align=left, inner sep=0mm, text=black]
at (axis cs:0.413,0.42955175,0) {$1$};
\node[right, align=left, inner sep=0mm, text=black]
at (axis cs:0.536198,0.629796771771772,0) {$1$};
\node[right, align=left, inner sep=0mm, text=black]
at (axis cs: 0.8,0.05,0) {$\Delta t=$ 0.05};
\end{axis}

\end{tikzpicture}%
\\

\ref{named}
\end{center} 
\caption{Example 1: convergence of the relative error in the $L^2$ and $H^1$ seminorm.}
\label{fig:l2h1_conv_example1}
\end{figure}

Next, we compare the performance of the proposed approach with that of conventional FE, when the domain is discretized with triangles. Table \ref{tabl:comptri_poly} presents the convergence of the relative error in the $L^2$ norm and $H^1$ seminorm. In this case, the domain is discretized with Voronoi meshes and structured triangular elements. The number of elements is kept the same in both the cases. It can be seen that the proposed method and the finite element method yield similar results. The main advantage of using polygonal mesh is that the meshing burden can be alleviated.
\begin{table}[ht]
\centering
 \caption{Convergence of the  error in the discrete $L^2$ norm and the $H^1$ norm: a comparison between the triangular and polygonal discretizations.}
 \label{tabl:comptri_poly}
\begin{tabular}{rcccccc}
\hline
\multicolumn{3}{c}{Polygons} && \multicolumn{3}{c}{Triangles} \\
\cline{1-3}\cline{5-7}
\answ{DOFs} & $L^2$ & $H^1$ && \answ{DOFs} & $L^2$ & $H^1$ \\
\hline
1002 & 7.9211$\times$10$^{-3}$ & 5.995$\times$10$^{-2}$ && 1502 & 2.1361$\times$10$^{-3}$ & 2.4044$\times$10$^{-2}$ \\
2002 & 2.6152$\times$10$^{-3}$ & 2.4342$\times$10$^{-2}$ && 3002 & 1.0277$\times$10$^{-3}$ & 1.1302$\times$10$^{-2}$\\
4002 & 1.0698$\times$10$^{-3}$ & 1.2770$\times$10$^{-2}$ && 6002 & 4.8927$\times$10$^{-4}$ & 6.9261$\times$10$^{-3}$ \\
10,002 & 3.3343$\times$10$^{-4}$ & 4.3689$\times$10$^{-3}$ && 15,002 & 2.7891$\times$10$^{-4}$ & 3.4745$\times$10$^{-3}$ \\
\hline
\end{tabular}
\end{table}

\subsection{Test Problem-2}
Next, we consider the following equation with a nonlinear source term:
\begin{equation}
D^{2}_{t}u-\Delta u= f(u)+g(x,y,t)\  \text{on} \quad \Omega \times I,
\label{num_ex_1}
\end{equation}
where $\Omega=[0,1]\times[0,1]$, and $f(u)=u^2$, and $I=(0,1]$ with $u=0$ on $\partial \Omega$. We choose $g(x,y,t)$ such that $u(x,y,t) =e^{-t}\ x\ y\ (1-x)\ (1-y)$ becomes the analytical solution of \eqref{num_ex_1}. In this example, we compare the performance of the proposed approach (product approximation technique) to treat the nonlinear term with that of the approach presented in~\cite{adak2019convergence}. The domain is discretized with Voronoi meshes and \answ{the} Crack-Nicolson scheme is employed for the temporal discretization. Table \ref{tbl:comp_fem} compares the relative error in the $L^2$ norm, CPU time in seconds and the number of Newton Iterations required between the present approach and the approach presented in~\cite{adak2019convergence}. The results are presented for the final time $T=$ 1 with a time step, $\Delta t=$ 0.01. It is inferred that the present approach requires fewer Newton iterations, which impacts the CPU time required for a similar order of accuracy. \answ{Further, we would like to highlight that the convergence analysis is performed for the nonlinear force function $f(u)=-\sin(u)$. Therefore this case ($f(u)=u^2$) is not covered by the theory. However, we have observed that the behaviour of the error is in accordance with that of the sine-Gordon nonlinearity and the error estimations for the case $f(u)=u^2$ can be done with standard modification. }

\begin{table}[ht]
\centering
 \caption{Convergence of the  error in the discrete $L^2$ norm, CPU time and number of Newton iterations: a comparison between the product approximate technique with the existing technique.}
 \label{tbl:comp_fem}
\begin{tabular}{rr|lll|lll}
\hline
           &       & Present method     &      &    & Ref.~\cite{adak2019convergence}         &      &    \\ 
           \cline{3-8}
Mesh size  & \answ{DOFs}   & $L^2$ Error & CPU-Time (s) & NI & $L^2$ Error & CPU-time (s) & NI \\
\hline
0.17468  & 162   & 3.2820$\times$10$^{-3}$   & 0.006974 & 1  & 3.2790$\times$10$^{-3}$   &0.118640      & 4  \\
0.08133 & 642  & 7.0280$\times$10$^{-4}$   & 0.040384 & 1  & 6.9781$\times$10$^{-4}$  & 0.533938 & 4  \\
0.04232 & 2601 & 1.5105$\times$10$^{-4}$   &0.381226  & 2  & 1.4099$\times$10$^{-4}$   & 2.4055 & 3  \\
0.02088  & 10601 & 3.2632$\times$10$^{-5}$   & 2.8086 & 2  & 3.1079$\times$10$^{-5}$   & 11.7883 & 3   \\ \hline
\end{tabular}
\end{table}

\subsection{Test Problem-3} 
To study the convergence behavior of the fully discrete scheme,  consider the problem~\eqref{num_ex_1} with $f(u)=-\sin u$ and $g(x,y,t)$ is chosen such that $u(x,y,t)=\sin(t) \sin(\pi x) \sin(\pi y)$ becomes the analytical solution. The results are reported in Table~\ref{table:time}. We emphasize that the optimal rate of convergence is observed in temporal direction for very small values of $h$. The computational domain, the mesh discretization, and final time are considered same as in the Test case~2. The analytical solution is smooth and profoundly depends on \answ{the} time variable.
\begin{table}[H]
\centering
\caption{Convergence of the relative error in the $L^2$ norm. The domain is discretized with \answ{smoothed Voronoi} element.}
\label{table:time}
 \begin{tabular}{cccc}
\hline
\multicolumn{1}{c}{} & \multicolumn{1}{c}{$\Delta t$} & \multicolumn{1}{c}{$L^2$ Error} & \multicolumn{1}{c}{Rate} \\ \hline
\multirow{4}{*}{$h=.010765$} & 1/5     & 4.8610$\times$10$^{-4}$   & -       \\
                       & 1/10    & 1.4540$\times$10$^{-4}$   & 1.74   \\
                       & 1/20    & 3.6105$\times$10$^{-5}$   & 2.00   \\
                       & 1/40    & 9.1523$\times$10$^{-6}$   & 1.98   \\
                                               
                        \hline \hline
\multirow{4}{*}{$h=.014894$} & 1/5     & 4.2010$\times$10$^{-4}$   & -        \\                                 
                       & 1/10    & 1.2403$\times$10$^{-4}$    & 1.76  \\
                       & 1/20    & 3.1441$\times$10$^{-5}$   & 1.98      \\
                       & 1/40    & 7.8060$\times$10$^{-6}$   & 2.01      \\\hline
                         
\end{tabular}
\end{table}
\subsection{Collision of four circular solitons}
As a last example, we consider the model problem (\ref{eqn:mod_prob}) with the dissipative coefficient $\gamma=$ 0.05 and the following initial conditions:
\begin{equation}
    \begin{split}
        & h(x,y)=4\ \tan^{-1}(\exp(\frac{4-\sqrt{(x+3)^2+(y+7)^2}}{0.436})), \quad -10 \leq x \leq 10, \quad -7 \leq y \leq 7 \\
        & g(x,y)=\frac{4.13}{ \cosh (\exp (\frac{4-\sqrt{(x+3)^2+(y+7)^2}}{0.436}))}, \quad \quad \quad \quad \quad \quad \  -10 \leq x \leq 10, \quad -7 \leq y \leq 7.
    \end{split}
\end{equation}
This is a problem of collision of four circular solitons. In this case, we consider homogeneous Neumann boundary condition. We represent the solution over the region $[-10,30]\times[-10,30]$ by extending across $x=$ -10 and $y=$ -10 and exploiting a symmetry relation. The numerical results are depicted in \fref{fig:four_ring_1} for a mesh size $h\approx$ 0.45, the time-step $\Delta t =$ 0.01. It is seen that at time $T=$ 0, the numerical solution yields four circular rings, which collide with each other and finally (at time $T=$ 11) yields one large ring. This is in agreement with the results presented in~\cite{dehghan2008numerical,argyris1991finite,sheng2005numerical}. 
\begin{figure}[htpb!]
\centering
\subfigure[$T=$ 0]{\includegraphics[width=.38\textwidth]{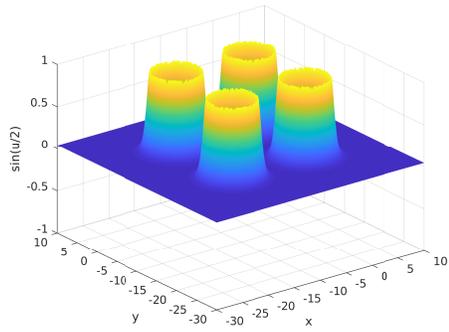}}
\subfigure[$T=$ 0]{\includegraphics[width=.38\textwidth]{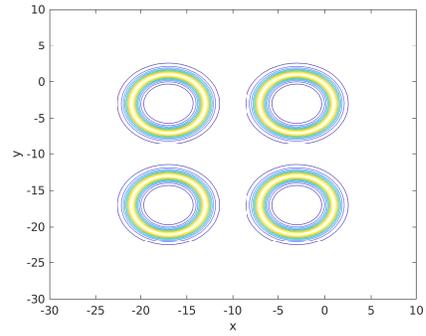}}
\subfigure[$T=$ 11]{\includegraphics[width=.38\textwidth]{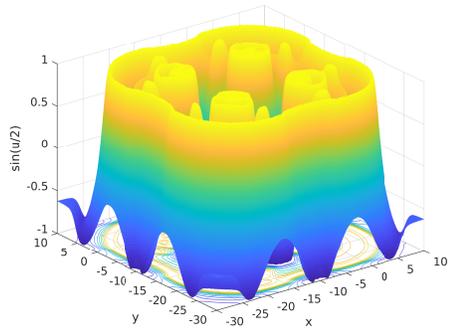}}
\subfigure[$T=$ 11]{\includegraphics[width=.38\textwidth]{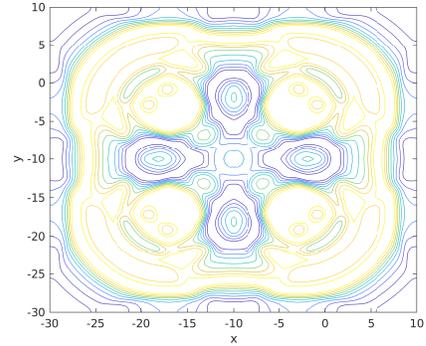}}
\caption{Collision of four circular ring solitons at time $T=$ 0 and $T=$ 11.}
\label{fig:four_ring_1}
\end{figure}

\section{Conclusion}
\label{con_sg}
In this paper, we employed the linear VEM to approximate the solutions of the two-dimensional sine-Gordon equation. The nonlinear source term is discretized by employing the product approximation technique. The salient feature of this approach is that the nonlinear term is `easily' computable form the \answ{DOFs} associated with the virtual element space. The proposed scheme offers a very simple Jacobian, which is numerically inexpensive to compute over general polygonal elements. \answ{Moreover, VEM is embedded with the advantage of mesh refinements (hanging nodes and interfaces are allowed) which makes computation cheaper}. The efficiency and the robustness of the proposed framework are demonstrated with numerical examples with domain discretized with general (possible concave) elements. 

\answ{\section{Extension of the product approximation technique \eqref{tic1} to the high order virtual element space}
In Section~\ref{comput_nonlinear}, we have discretized the nonlinear load term using the product approximation technique over the linear virtual element space. Within this, we can observe that the product approximation of the nonlinear force function is same as the interpolation of the force function over the virtual element space. Mathematically, this can be expressed as:
\begin{equation}
l_j(\sin(u_h))=\sin(u_h(V_j))=\sin(l_j(u_h)),
\label{extension:1}
\end{equation}
where $l_j$ corresponds to the evaluation of function $u_h$ at the vertex $V_j$ of a polygonal element $K$. The linear virtual element space deals only with the DOFs at the vertices\footnote{which are evaluation of the functions at the vertices}, which reduces the right hand side of the nonlinear system to a matrix structure, that can be easily evaluated by the mass matrix and the column vector consists of the unknowns (cf Equation~\eqref{nonlin_1}). 

As the DOFs for the high order virtual element space also includes the cell moments, the above reduction is not feasible. The cell moments are integral of the functions over the polygonal element, $K$~\cite{cangiani2016conforming}. In \eqref{extension:1}, if $l_j$ corresponds to the cell moment, i.e. \mbox{$l_j :=\frac{1}{|K|}~\int_K m_{\alpha} \ u_h$}, then we have:
\begin{equation*}
l_j(\sin(u_h))=\frac{1}{|K|}~\int_K \ m_{\alpha}\  \sin(u_h), \quad m_\alpha \in \mathcal{M}(K).
\end{equation*}
However, $\sin(l_j(u_h))=\sin \Big(\frac{1}{|K|}~\int_K \ m_{\alpha}\  u_h \Big)$, which imples $l_j(\sin(u_h))\neq \sin(l_j(u_h))$. An extension of the product approximation technique to the high order virtual element space, its theoretical estimation and numerical implementation will be a topic for future communications.}


\section*{Appendix}
\begin{algorithm}[H]
 Compute stiffness matrix $\mathbf{A}$ and mass matrix $\mathbf{M}$ \\
 \For {\rm K=1:nele}{
 1:~Evaluate $\mathbf{A}^K$ = $[a_h^{K}(\phi_i,\phi_j)]_{N^K \times N^K}$ \\
 2:~Evaluate $\textbf{M}^K$=$[m_h^{K}(\phi_i,\phi_j)]_{N^K \times N^K}$ \\
 3:~Evaluate $\bar{\mathbf{M}}=[m_h^{K}( \mathcal{P}^0_K\phi_i,\mathcal{P}^0_K\phi_j)]_{N^K \times N^K}$ \\
  4:~Assemble local stiffness and mass matrix to form global stiffness and mass matrix.
 }
  Solve the nonlinear system of equation using Crank-Nicolson method 
 \caption{Pseudo-code for the implementation of sine-Gordon equation}
  1:~Create a partition $0=t_{0}<t_{1}< \ldots <t_m=T$ of the time interval [0,T] with m time-steps $ \Delta t=t_l-t_{l-1}$. \\
 2:~Set $\mathbf{u}^{0}=I_h h(x,y)$\\
 3:~Compute $\mathbf{u}^{1}$ from initial condition\\
 4:~\For {\rm l=2,\ldots,m}{
 Solve nonlinear system of equation \eqref{full_dis_1}
 \begin{equation*}
 \begin{split}
     & \bigg[ \Big(1+\frac{\gamma\ \Delta t}{2} \Big)\ \textbf{M}+(\Delta t^2/2)\  \textbf{A} \bigg]\ \mathbf{u}^{l}- (\Delta t^2/2) \ \tilde{f}^{l} =  2 \  \textbf{M} \mathbf{u}^{l-1} \\
     & \quad +(\Delta t^2/2)~\tilde{f}^{l-1}-\Big[ \Big( 1- \frac{\gamma \  \Delta t}{2} \Big) \textbf{M}+(\Delta t^2/2)~ \textbf{A} \Big]~ \mathbf{u}^{l-2}.
     \end{split}
 \end{equation*}
 } 
 5:~Compute error using  $I_hu$ and numerical solution $\mathbf{u}^m$
\end{algorithm}


\newpage
\bibliographystyle{elsarticle-num}
\bibliography{reference_sg}

\begin{thebibliography}{10}
\expandafter\ifx\csname url\endcsname\relax
  \def\url#1{\texttt{#1}}\fi
\expandafter\ifx\csname urlprefix\endcsname\relax\def\urlprefix{URL }\fi
\expandafter\ifx\csname href\endcsname\relax
  \def\href#1#2{#2} \def\path#1{#1}\fi

\bibitem{ben1986numerical}
G.~Ben-Yu, P.~J. Pascual, M.~J. Rodriguez, L.~V{\'a}zquez, Numerical solution
  of the {sine-Gordon} equation, Appl. Math. Comput. 18~(1) (1986) 1--14.

\bibitem{argyris1991finite}
J.~Argyris, M.~Haase, J.~C. Heinrich, Finite element approximation to
  two-dimensional {sine-Gordon} solitons, Comput. Method Appl. M. 86~(1) (1991)
  1--26.

\bibitem{dehghan2010numerical}
\textcolor{red}{Dehghan, Mehdi and Ghesmati, Arezou}, Numerical simulation of
  two-dimensional {sine-Gordon} solitons via a local weak meshless technique
  based on the radial point interpolation method {(RPIM)}, \answ{Comput. Phys.}
  181~(4) (2010) 772--786.

\bibitem{dehghan2008dual}
M.~Dehghan, D.~Mirzaei, The dual reciprocity boundary element method {(DRBEM)}
  for two-dimensional {sine-Gordon} equation, Comput. Methods Appl. M.
  197~(6-8) (2008) 476--486.

\bibitem{christiansen1981numerical}
P.~L. Christiansen, P.~S. Lomdahl, Numerical study of 2+1 dimensional {
  sine-Gordon} solitons, Physica D. 2~(3) (1981) 482--494.

\bibitem{nakajima1974numerical}
K.~Nakajima, Y.~Onodera, T.~Nakamura, R.~Sato, Numerical analysis of vortex
  motion on {Josephson} structures, J. Appl. Phys. 45~(9) (1974) 4095--4099.

\bibitem{dehghan2008numerical}
M.~Dehghan, A.~Shokri, A numerical method for solution of the two-dimensional
  {sine-Gordon} equation using the radial basis functions, Math. Comput. Simul.
  79~(3) (2008) 700--715.

\bibitem{sukumarmalsch2006}
N.~Sukumar, E.~A. Malsch, Recent advances in the construction of polygonal
  finite element interpolants, Arch. Comput. Methods Eng. 13~(1) (2006) 129.

\bibitem{manzinirusso2014}
G.~Manzini, A.~Russo, N.~Sukumar, New perspectives on polygonal and polyhedral
  finite element method, Math. Models Methods Appl. Sci. 24 (2014) 1665--1699.

\bibitem{natarajanooi2014}
S.~Natarajan, E.~T. Ooi, I.~Chiong, C.~Song, Convergence and accuracy of
  displacement based finite element formulation over arbitrary polygons:
  Laplace interpolants, strain smoothing and scaled boundary polygon
  formulation, Finite Elem. Anal. Des. 85 (2014) 101--122.

\bibitem{szesheng2005}
K.~Sze, N.~Sheng, Polygonal finite element method for nonlinear constitutive
  modeling of polycrystalline ferroelectrics, Finite Elem. Anal. Des. 42~(2)
  (2005) 107--129.

\bibitem{bishop2014}
J.~Bishop, A displacement based finite element formulation for general
  polyhedra using harmonic shape functions, Int. J. Numer. Meth. Eng. 97 (2014)
  1--31.

\bibitem{liutrung2010}
G.~Liu, N.~T. Trung, Smoothed finite element methods, CRC Press, 2010.

\bibitem{natarajanbordas2015}
S.~Natarajan, S.~P.~A. Bordas, E.~T. Ooi, Virtual and smoothed finite elements:
  a connection and its application to polygonal/polyhedral finite element
  methods, Int. J. Numer. Meth. Eng. 104 (2015) 1173--1199.

\bibitem{francisa.ortiz-bernardin2017}
A.~Francis, A.Ortiz-Bernardin, S.~P.~A. Bordas, S.~Natarajan, Linear smoothed
  polygonal and polyhedral finite elements, Int. J. Numer. Meth. Eng. 109~(9)
  (2017) 1263--1288.

\bibitem{beirao2013basic}
L.~Beir{\~a}o~da Veiga, F.~Brezzi, A.~Cangiani, G.~Manzini, L.~Marini,
  A.~Russo, Basic principles of virtual element methods, Math. Models Methods
  Appl. Sci. 23~(01) (2013) 199--214.

\bibitem{beirao2016virtual}
L.~Beir{\~a}o~da Veiga, F.~Brezzi, L.~Marini, A.~Russo, Virtual element method
  for general second-order elliptic problems on polygonal meshes, Math. Models
  Methods Appl. Sci. 26~(04) (2016) 729--750.

\bibitem{vacca2015virtual}
G.~Vacca, L.~Beir{\~a}o~da Veiga, Virtual element methods for parabolic
  problems on polygonal meshes, Numer. Meth. Partial Differ. Equ. 31~(6) (2015)
  2110--2134.

\bibitem{antonietti2014stream}
P.~F. Antonietti, L.~B. da~Veiga, D.~Mora, M.~Verani, A stream virtual element
  formulation of the {Stokes} problem on polygonal meshes, SIAM J. Numer. Anal.
  52~(1) (2014) 386--404.

\bibitem{brezzi2013virtual}
F.~Brezzi, L.~D. Marini, Virtual element methods for plate bending problems,
  Comput. Methods Appl. Mech. Eng. 253 (2013) 455--462.

\bibitem{vacca2016virtual}
G.~Vacca, Virtual element methods for hyperbolic problems on polygonal meshes,
  Comput. Math. Appl. 74 (2017) 882--898.

\bibitem{da2013virtual}
L.~Beir{\~a}o~da Veiga, F.~Brezzi, L.~D. Marini, Virtual elements for linear
  elasticity problems, SIAM J. Numer. Anal. 51~(2) (2013) 794--812.

\bibitem{da2015virtual}
L.~Beir{\~a}o~da Veiga, C.~Lovadina, D.~Mora, A virtual element method for
  elastic and inelastic problems on polytope meshes, Comput. Methods Appl.
  Mech. Eng. 295 (2015) 327--346.

\bibitem{benedetto2016order}
M.~F. Benedetto, S.~Berrone, A.~Borio, S.~Pieraccini, S.~Scial{\`o}, Order
  preserving {SUPG} stabilization for the virtual element formulation of
  advection-diffusion problems, Comput. Methods Appl. Mech. Eng. 311 (2016)
  18--40.

\bibitem{mora2015virtual}
D.~Mora, G.~Rivera, R.~Rodr{\'\i}guez, A virtual element method for the
  {Steklov} eigenvalue problem, Math. Model Methods Appl. Sci. 25~(08) (2015)
  1421--1445.

\bibitem{certik2018virtual}
O.~{\v{C}}ert{\'\i}k, F.~Gardini, G.~Manzini, G.~Vacca, The virtual element
  method for eigenvalue problems with potential terms on polytopic meshes,
  \answ{Appl. Math.} 63~(3) (2018) 333--365.

\bibitem{cangiani2016nonconforming}
A.~Cangiani, V.~Gyrya, G.~Manzini, The nonconforming virtual element method for
  the {Stokes} equations, SIAM J. Numer. Anal. 54~(6) (2016) 3411--3435.

\bibitem{de2016nonconforming}
B.~{Ayuso de Dios}, K.~Lipnikov, G.~Manzini, The nonconforming virtual element
  method, ESAIM: Math. Model. Num 50~(3) (2016) 879--904.

\bibitem{cangiani2016conforming}
A.~Cangiani, G.~Manzini, O.~J. Sutton, Conforming and nonconforming virtual
  element methods for elliptic problems, IMA J. Numer. Anal. 37~(3) (2016)
  1317--1354.

\bibitem{BernardinAlvarez2017}
A.~Ortiz-Bernardin, C.~Alvarez, N.~Hitschfeld-Kahler, A.~Russo,
  R.~Silva-Valenzuela, E.~Olate-Sanzana, Veamy: an extensible object-oriented
  {C++} library for the virtual element method, Numer. Algorithms  1--32.

\bibitem{Sutton2017}
O.~J. Sutton, The virtual element method in 50 lines of matlab, Numer.
  Algorithms 75~(4) (2017) 1141--1159.

\bibitem{dhanushnatarajan2018}
V.~Dhanush, S.~Natarajan, Implementation of the virtual element method for
  coupled thermo-elasticity in {Abaqus}, Numer. Algorithms 80~(3) (2019)
  1037--1058.

\bibitem{antonietti2016c}
P.~F. Antonietti, L.~Beir{\~a}o~da Veiga, S.~Scacchi, M.~Verani, A {$C^{1}$}
  virtual element method for the {Cahn-Hilliard} equation with polygonal
  meshes, SIAM J. Numer. Anal. 54~(1) (2016) 34--56.

\bibitem{da2018virtual}
L.~Beir{\~a}o~da Veiga, C.~Lovadina, G.~Vacca, Virtual elements for the
  {Navier-Stokes} problem on polygonal meshes, SIAM J. Numer. Anal. 56~(3)
  (2018) 1210--1242.

\bibitem{adak2019convergence}
D.~Adak, E.~Natarajan, S.~Kumar, Convergence analysis of virtual element
  methods for semilinear parabolic problems on polygonal meshes, Numer. Meth.
  Part. D. E. 35~(1) (2019) 222--245.

\bibitem{adak2018virtual}
D.~Adak, E.~Natarajan, S.~Kumar, Virtual element method for semilinear
  hyperbolic problems on polygonal meshes, Int. J. Comput. Math. 96~(5) (2019)
  971--991.

\bibitem{christie1981product}
I.~Christie, D.~F. Griffiths, A.~R. Mitchell, J.~M. Sanz-Serna, Product
  approximation for non-linear problems in the finite element method, IMA J.
  Numer. Anal. 1~(3) (1981) 253--266.

\bibitem{grisvard1992singularities}
P.~Grisvard, Singularities in boundary value problems, Vol.~22, Springer, 1992.

\bibitem{baker1980multistep}
G.~A. Baker, V.~A. Dougalis, O.~Karakashian, On multistep {Galerkin}
  discretizations of semilinear hyperbolic and parabolic equations, Nonlinear
  Anal.Theory 4~(3) (1980) 579--597.

\bibitem{lions2012non}
J.~L. Lions, E.~Magenes, Non-homogeneous boundary value problems and
  applications, Vol.~1, Springer Science \& Business Media, 2012.

\bibitem{ahmad2013equivalent}
B.~Ahmad, A.~Alsaedi, F.~Brezzi, L.~D. Marini, A.~Russo, Equivalent projectors
  for virtual element methods, Comput. Math. Appl. 66~(3) (2013) 376--391.

\bibitem{mascotto2018ill}
L.~Mascotto, Ill-conditioning in the virtual element method: Stabilizations and
  bases, Numer. Meth. Part. D. E. 34~(4) (2018) 1258--1281.

\bibitem{da2017high}
L.~Beir{\~a}o~da Veiga, F.~Dassi, A.~Russo, High-order virtual element method
  on polyhedral meshes, Comput. Math. Appl. 74~(5) (2017) 1110--1122.

\bibitem{dassi2018exploring}
F.~Dassi, L.~Mascotto, Exploring high-order three dimensional virtual elements:
  bases and stabilizations, Comput. Math. Appl. 75~(9) (2018) 3379--3401.

\bibitem{beirao2014hitchhiker}
L.~Beir{\~a}o~da Veiga, F.~Brezzi, L.~Marini, A.~Russo, The hitchhiker's guide
  to the virtual element method, Math. Models Methods Appl. Sci. 24~(08) (2014)
  1541--1573.

\bibitem{brenner2007mathematical}
S.~Brenner, R.~Scott, The mathematical theory of finite element methods,
  Vol.~15, Springer Science \& Business Media, 2007.

\bibitem{dupont1980polynomial}
T.~Dupont, R.~Scott, Polynomial approximation of functions in sobolev spaces,
  Math. Comput. 34~(150) (1980) 441--463.

\bibitem{thomee1984galerkin}
V.~Thom{\'e}e, Galerkin finite element methods for parabolic problems, Vol.
  1054, Springer, 1984.

\bibitem{li2008computational}
J.~Li, Y.-T. Chen, Computational partial differential equations using MATLAB,
  Chapman and Hall/CRC Press, 2008.

\bibitem{larsson1989interpolation}
S.~Larsson, V.~Thom{\'e}e, N.-Y. Zhang, W.~Wendland, Interpolation of
  coefficients and transformation of the dependent variable in finite element
  methods for the non-linear heat equation, Math. Method Appl. Sci. 11~(1)
  (1989) 105--124.

\bibitem{chen1989error}
C.-M. Chen, S.~Larsson, N.-Y. Zhang, Error estimates of optimal order for
  finite element methods with interpolated coefficients for the nonlinear heat
  equation, IMA J. Numer. Anal. 9~(4) (1989) 507--524.

\bibitem{dupont19732}
T.~Dupont, {$L^2$}-estimates for {Galerkin} methods for second order hyperbolic
  equations, SIAM J. Numer. Anal. 10~(5) (1973) 880--889.

\bibitem{talischi2012polymesher}
C.~Talischi, G.~H. Paulino, A.~Pereira, I.~F. Menezes, {PolyMesher: a
  general-purpose mesh generator for polygonal elements written in Matlab},
  Struct. Multidiscip. O. 45~(3) (2012) 309--328.

\bibitem{sheng2005numerical}
Q.~Sheng, A.-Q.~M. Khaliq, D.~A. Voss, Numerical simulation of two-dimensional
  {sine-Gordon} solitons via a split cosine scheme, Math. Comput. Simul. 68~(4)
  (2005) 355--373.

\end{thebibliography}

\end{document}